\title[Fractional cycle decompositions]{Fractional cycle decompositions in hypergraphs}
\author[F.~Joos]{Felix Joos}
\address[F.~Joos]{Institut f\"ur Informatik, Universit\"at Heidelberg, 
Germany
}
\email{joos@informatik.uni-heidelberg.de}
\author[M.~K\"uhn]{Marcus K\"uhn}
\address[M.~K\"uhn]{Institut f\"ur Informatik, Universit\"at Heidelberg,
	Germany
}
\email{kuehn@informatik.uni-heidelberg.de}
\thanks{The research leading to these results was partially supported by the Deutsche Forschungsgemeinschaft (DFG, German Research Foundation) -- 428212407.
}
\newtheorem{theorem}[algorithm]{Theorem}
\newtheorem{lemma}[algorithm]{Lemma}
\theoremstyle{definition}
\newtheorem{conj}[algorithm]{Conjecture}
\newtheorem{example}[algorithm]{Example}
\newtheoremstyle{claimstyle}{5pt}{5pt}{\em}{5pt}{\em}{:}{5pt}{}
\theoremstyle{claimstyle}
\newtheoremstyle{stepstyle}{10pt}{5pt}{\em}{0pt}{\em}{:}{5pt}{}
\theoremstyle{stepstyle}
\numberwithin{equation}{section}
\definecolor{darkblue}{rgb}{0,0,0.5}
\def\noproof{{\unskip\nobreak\hfill\penalty50\hskip2em\hbox{}\nobreak\hfill%
       $\square$\parfillskip=0pt\finalhyphendemerits=0\par}\goodbreak}
\def\endproof{\noproof\bigskip}
\newdimen\margin
\def\textno#1&#2\par{
   \margin=\hsize
   \advance\margin by -4\parindent
          \setbox1=\hbox{\sl#1}
   \ifdim\wd1 < \margin
      $$\box1\eqno#2$$
   \else
      \bigbreak
      \hbox to \hsize{\indent$\vcenter{\advance\hsize by -3\parindent
      \it\noindent#1}\hfil#2$}
      \bigbreak
   \fi}
\def\lateproof#1{\removelastskip\penalty55\medskip\noindent\setcounter{claim}{0}\setcounter{step}{0}{\bf Proof of #1. }} 
\DeclarePairedDelimiter{\abs}{\lvert}{\rvert}
\DeclarePairedDelimiter{\floor}{\lfloor}{\rfloor}
\DeclarePairedDelimiter{\ceil}{\lceil}{\rceil}
\begin{document}

\makeatletter
\def\shortrightharpoonup{{\mathchoice%
		{\clipbox{6.6mm 0.6mm 0mm 0.2mm}{$ \displaystyle\mathord-\rightharpoonup $}}%
		{\clipbox{3.55mm 0.6mm 0mm 0.2mm}{$ \textstyle\mathord-\rightharpoonup $}}%
		{\clipbox{4mm 0.6mm 0mm 0.2mm}{$ \scriptstyle\mathord-\rightharpoonup $}}%
		{\clipbox{3.75mm 0.6mm 0mm 0.2mm}{$ \scriptscriptstyle\mathord-\rightharpoonup $}}%
	}%
}

\def\shortminus{{\mathchoice%
		{\clipbox{1mm 0.6mm 0mm 0.2mm}{$ \displaystyle\mathord- $}}
		{\clipbox{1mm 0.6mm 0mm 0.2mm}{$ \textstyle\mathord- $}}
		{\clipbox{1mm 0.6mm 0mm 0.2mm}{$ \scriptstyle\mathord- $}}
		{\clipbox{1mm 0.6mm 0mm 0.2mm}{$ \scriptscriptstyle\mathord- $}}
}}
\newcommand{\overrightharpoonup}[1]{\mathchoice%
	{\vbox{\m@th\ialign{##\crcr{\normalsize$\scriptstyle\m@th \mkern-0mu\smash \shortminus\mkern-2mu\cleaders\hbox {$\mkern-2mu \scriptstyle\shortminus$}\hfill\mkern-7mu\shortrightharpoonup\mkern-0mu$}\crcr\noalign{\kern-0pt\nointerlineskip}$\hfil\displaystyle{#1}\hfil$\crcr}}}%
	{\vbox{\m@th\ialign{##\crcr{\normalsize$\scriptstyle\m@th \mkern-0mu\smash \shortminus\mkern-2mu\cleaders\hbox {$\mkern-2mu \scriptstyle\shortminus$}\hfill\mkern-7mu\shortrightharpoonup\mkern-0mu$}\crcr\noalign{\kern-0pt\nointerlineskip}$\hfil\textstyle{#1}\hfil$\crcr}}}%
	{\vbox{\m@th\ialign{##\crcr{\normalsize$\scriptscriptstyle\m@th \mkern-0mu\smash \shortminus\mkern-2mu\cleaders\hbox {$\mkern-2mu \scriptscriptstyle\shortminus$}\hfill\mkern-7mu\shortrightharpoonup\mkern-0mu$}\crcr\noalign{\kern-0pt\nointerlineskip}$\hfil\scriptstyle{#1}\hfil$\crcr}}}%
	{\vbox{\m@th\ialign{##\crcr{\normalsize$\scriptscriptstyle\m@th \mkern0mu\smash \shortminus\mkern-2mu\cleaders\hbox {$\mkern-2mu \scriptscriptstyle\shortminus$}\hfill\mkern-7mu\shortrightharpoonup\mkern-0mu$}\crcr\noalign{\kern-0pt\nointerlineskip}$\hfil\scriptscriptstyle{#1}\hfil$\crcr}}}%
}
\makeatother

\newcommand{\new}[1]{\textcolor{red}{#1}}
\newcommand{\old}[1]{\textcolor{red}{\st{#1}}}
\def\COMMENT#1{}
\def\TASK#1{}

\newcommand{\todo}[1]{\begin{center}\color{red}\textbf{to do:} #1 \end{center}}

\def\eps{{\varepsilon}}
\def\heps{{\hat{\varepsilon}}}

\newcommand{\ex}{\mathbb{E}}
\newcommand{\pr}{\mathbb{P}}
\newcommand{\cB}{\mathcal{B}}
\newcommand{\cA}{\mathcal{A}}
\newcommand{\cE}{\mathcal{E}}
\newcommand{\cS}{\mathcal{S}}
\newcommand{\cF}{\mathcal{F}}
\newcommand{\cG}{\mathcal{G}}
\newcommand{\bL}{\mathbb{L}}
\newcommand{\bF}{\mathbb{F}}
\newcommand{\bZ}{\mathbb{Z}}
\newcommand{\cH}{\mathcal{H}}
\newcommand{\cC}{\mathcal{C}}
\newcommand{\cM}{\mathcal{M}}
\newcommand{\bN}{\mathbb{N}}
\newcommand{\bR}{\mathbb{R}}
\def\O{\mathcal{O}}
\newcommand{\cP}{\mathcal{P}}
\newcommand{\cQ}{\mathcal{Q}}
\newcommand{\cR}{\mathcal{R}}
\newcommand{\cJ}{\mathcal{J}}
\newcommand{\cL}{\mathcal{L}}
\newcommand{\cK}{\mathcal{K}}
\newcommand{\cD}{\mathcal{D}}
\newcommand{\cI}{\mathcal{I}}
\newcommand{\cN}{\mathcal{N}}
\newcommand{\cV}{\mathcal{V}}
\newcommand{\cT}{\mathcal{T}}
\newcommand{\cU}{\mathcal{U}}
\newcommand{\cX}{\mathcal{X}}
\newcommand{\cZ}{\mathcal{Z}}
\newcommand{\cW}{\mathcal{W}}
\newcommand{\sT}{\mathscr{T}}
\newcommand{\1}{{\bf 1}_{n\not\equiv \delta}}
\newcommand{\eul}{{\rm e}}
\newcommand{\Erd}{Erd\H{o}s}
\newcommand{\cupdot}{\mathbin{\mathaccent\cdot\cup}}
\newcommand{\whp}{whp }
\newcommand{\bX}{\mathcal{X}}
\newcommand{\bV}{\mathcal{V}}
\newcommand{\hbX}{\widehat{\mathcal{X}}}
\newcommand{\hbV}{\widehat{\mathcal{V}}}
\newcommand{\hX}{\widehat{X}}
\newcommand{\hV}{\widehat{V}}
\newcommand{\tX}{\widetilde{X}}
\newcommand{\tV}{\widetilde{V}}
\newcommand{\cbI}{\overline{\mathcal{I}^\alpha}}
\newcommand{\hAj}{\widehat{A}^\sigma_j}
\newcommand{\hVM}{V^M}
\newcommand{\supp}{{\rm supp}}
\newcommand{\ordered}[1]{\overrightharpoonup{\mathbf{#1}}}
\newcommand{\orderededge}[1]{\overrightharpoonup{#1}}
\newcommand{\rotated}[2]{\overrightharpoonup{\mathbf{#1}}^{\langle#2\rangle}}
\newcommand{\rotatededge}[2]{\overrightharpoonup{#1}^{\langle#2\rangle}}
\newcommand{\ordsubs}[2]{(#1)_{#2}}
\newcommand{\unord}[1]{\mathbf{#1}}
\newcommand{\unordedge}[1]{#1}
\newcommand{\unordsubs}[2]{\binom{#1}{#2}}
\newcommand{\diff}{\mathop{}\!\mathrm{d}}

\newcommand{\doublesquig}{%
  \mathrel{%
    \vcenter{\offinterlineskip
      \ialign{##\cr$\rightsquigarrow$\cr\noalign{\kern-1.5pt}$\rightsquigarrow$\cr}%
    }%
  }%
}

\newcommand{\defn}{\emph}

\newcommand\restrict[1]{\raisebox{-.5ex}{$|$}_{#1}}

\newcommand{\cprob}[2]{\prob{#1 \;\middle|\; #2}}
\newcommand{\prob}[1]{\mathrm{\mathbb{P}}\left[#1\right]}
\newcommand{\expn}[1]{\mathrm{\mathbb{E}}\left[#1\right]}
\def\gnp{G_{n,p}}
\def\G{\mathcal{G}}
\def\lflr{\left\lfloor}
\def\rflr{\right\rfloor}
\def\lcl{\left\lceil}
\def\rcl{\right\rceil}

\newcommand{\qbinom}[2]{\binom{#1}{#2}_{\!q}}
\newcommand{\binomdim}[2]{\binom{#1}{#2}_{\!\dim}}

\newcommand{\grass}{\mathrm{Gr}}

\newcommand{\brackets}[1]{\left(#1\right)}
\def\sm{\setminus}
\newcommand{\Set}[1]{\{#1\}}
\newcommand{\set}[2]{\{#1\,:\;#2\}}
\newcommand{\krq}[2]{K^{(#1)}_{#2}}
\newcommand{\ind}[1]{$\mathbf{S}(#1)$}
\newcommand{\indcov}[1]{$(\#)_{#1}$}
\def\In{\subseteq}
\newcommand{\IND}{\mathbbm{1}}
\newcommand{\norm}[1]{\|#1\|}
\newcommand{\normv}[1]{\|#1\|_v}
\newcommand{\normV}[2]{\|#1\|_{#2}}

\newcommand{\Bij}{{\rm Bij}}
\newcommand{\inj}{{\rm inj}}
\newcommand{\mad}{{\rm mad}}

\newcommand{\commonneighborhood}[1]{#1-common neighborhood}

\newcommand{\bOne}{\mathbbm{1}}

\begin{abstract} 
\noindent
We prove that for any integer $k\geq 2$ and $\eps>0$,
there is an integer $\ell_0\geq 1$
such that any $k$-uniform hypergraph on $n$ vertices with minimum codegree at least $(1/2+\eps)n$ has a fractional decomposition into tight cycles of length $\ell$ ($\ell$-cycles for short)
whenever $\ell\geq \ell_0$ and~$n$ is large in terms of $\ell$.
This is essentially tight.

This immediately yields also approximate integral decompositions for these hypergraphs into $\ell$-cycles.
Moreover, for graphs this even guarantees integral decompositions into $\ell$-cycles and solves a problem posed by Glock, K\"uhn and Osthus.
For our proof, we introduce a new method for finding a set of $\ell$-cycles such that every edge is contained in roughly the same number of $\ell$-cycles from this set by exploiting that certain Markov chains are rapidly mixing.
\end{abstract}

\maketitle
\section{Introduction}
The results in this paper are motivated by questions triggered from the famous conjecture of Nash-Williams, which remains unsolved despite much research activity in the area.
Let us call a graph \emph{triangle-divisible} if all its vertices have even degree and the number of edges is divisible by $3$ (clearly necessary conditions to have a decomposition of the edge set into triangles).

\begin{conj}[Nash-Williams]\label{conj:NW}
There exists an~$n_0$ such that every triangle-divisible graph~$G$ on $n$ vertices with~$n\geq n_0$ and $\delta(G)\geq 3n/4$ admits a decomposition of its edge set into triangles.
\end{conj}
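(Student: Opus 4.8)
The conjecture of Nash-Williams is one of the central open problems in the area, so what follows is a programme rather than a complete proof; let me describe the route I would pursue and pinpoint where it currently stalls. The modern strategy, initiated by Barber, K\"uhn, Lo and Osthus, is to separate the problem into a \emph{fractional relaxation} and an \emph{absorption} step. First I would aim to establish the fractional statement: if $n$ is large and $\delta(G)\geq(3/4+\eps)n$, then $G$ has a fractional triangle decomposition, i.e.\ a weighting $w$ of the triangles of $G$ by reals in $[0,1]$ with $\sum_{T\ni e}w(T)=1$ for every edge $e$. Given such a threshold for the fractional problem (for \emph{all} graphs of slightly larger minimum degree), the iterative-absorption machinery of Barber--K\"uhn--Lo--Osthus converts it, via the R\"odl nibble for the bulk of the edges and a robustly pre-built absorbing structure for the divisible leftover, first into an approximate and then into an \emph{exact} integral triangle decomposition at essentially the same minimum-degree threshold; I would invoke this transference theorem as a black box, having checked only that its hypotheses (fractional decomposability for graphs with marginally larger $\delta$, plus the stated divisibility) are met.

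The heart of the matter is therefore the fractional step, which I would attack through LP duality: $G$ has no fractional triangle decomposition iff there is $f\colon E(G)\to\bR$ with $\sum_{e\in T}f(e)\geq 0$ for every triangle $T$ but $\sum_{e\in E(G)}f(e)<0$, so the task is to exclude such an $f$ once $\delta(G)$ is large. The technique that is known to work for weaker thresholds is a careful averaging of the triangle inequalities: for an edge $e=xy$ the common neighbourhood $N(x)\cap N(y)$ has size at least $2\delta(G)-n$, which is a positive fraction of $n$ as soon as $\delta(G)>n/2$; summing $f(xy)+f(xz)+f(yz)\geq 0$ over $z\in N(x)\cap N(y)$, normalising suitably, and combining over all edges, one wants to conclude $\sum_e c_e f(e)\geq 0$ with all $c_e>0$, contradicting $\sum_e f(e)<0$. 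Making the coefficients $c_e$ uniform enough forces a preliminary regularisation of $G$ --- splitting off a sparse exceptional part so that on the remainder all relevant codegrees are tightly concentrated --- after which the averaging becomes essentially uniform and goes through.

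The main obstacle, and the reason the conjecture remains open, is bringing the constant down to exactly $3/4$. At $\delta(G)=3n/4$ an edge can have codegree as low as $n/2$, and the averaging argument loses precisely a constant factor there; indeed extremal-type configurations (complete-tripartite-like graphs, or graphs built from blow-ups of $C_5$) show that no purely ``soft'' averaging over common neighbourhoods can reach $3/4$. Honest execution of the plan above --- following Dross and its subsequent refinements (notably by Delcourt and Postle) --- would, I expect, get the threshold down to somewhere around $0.82n$ but no further; closing the remaining gap to $3n/4$ would require either a genuinely new handle on the fractional LP near the threshold or an integral absorbing construction that exploits the divisibility hypotheses far more aggressively than the current transference does. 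That last step is the hard open part.
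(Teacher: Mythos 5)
You have correctly recognised that this is a \emph{conjecture}, not a theorem: the paper states Conjecture~\ref{conj:NW} purely as motivation and offers no proof of it, so there is nothing in the paper to compare your attempt against. Your summary of the state of the art is essentially accurate — Barber, K\"uhn, Lo and Osthus~\cite{BKLO:16} reduce the problem (at a marginally larger minimum degree) to the fractional version via iterative absorption, and the best current bound on the fractional threshold is the $0.828n$ of Delcourt and Postle~\cite{DP:21}, which the paper quotes as giving exact triangle decompositions whenever $\delta(G)\geq 0.83n$. Your sketch of the LP-duality/averaging route for the fractional step and your diagnosis of where it loses a constant factor near $3n/4$ are also a fair account of why the problem remains open.

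One small correction: Dross's contribution was already to the \emph{fractional} problem (he obtained threshold $0.9n$), with Delcourt--Postle refining the fractional bound further to $0.828n$; your phrasing slightly suggests Dross worked on the integral side. Also note that this paper's actual contribution is orthogonal to the Nash-Williams programme: it concerns fractional decompositions into long tight cycles $C_\ell^k$, for which the threshold tends to $1/2$ rather than $3/4$, and the method (random regular transition systems plus rapid mixing of the induced Markov chain on ordered edges, Lemmas~\ref{lemma: transition system connecting}--\ref{lemma: counting walks}) is quite different from the common-neighbourhood averaging you outline for triangles. So while your programme is the right one for Conjecture~\ref{conj:NW}, it is not the approach of this paper, and — as you rightly say — the final step from roughly $0.83n$ down to $3n/4$ remains genuinely open.
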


Although Conjecture~\ref{conj:NW} remains open, we already have some understanding regarding decompositions of the edge set into triangles (triangle decompositions).
Arguably, the biggest step was done by  Barber, K\"uhn, Lo and Osthus~\cite{BKLO:16}
by proving that the problem can be relaxed to fractional triangle decompositions if we increase the lower bound on the minimum degree by a minor order term.
Delcourt and Postle~\cite{DP:21} recently showed the existence of a fractional triangle decomposition in a graph~$G$ on $n$ vertices whenever $\delta(G)\geq 0.828n$ and $n$ is large.
In turn this implies with the result above that every large triangle-divisible  graph $G$ on $n$ vertices with $\delta(G)\geq 0.83n$ admits a triangle decomposition.

We say a hypergraph is \emph{$k$-uniform} if all its edges contain exactly $k$ vertices and we also refer to $k$-uniform hypergraphs as \emph{$k$-graphs}; 2-graphs are as usual called graphs.
Given a $k$-graph $F$, an \emph{$F$-decomposition} of a $k$-graph $H$ is a decomposition of the edge set of $H$ into edge-disjoint copies of $F$.
The topic of $k$-graph decompositions has experienced major breakthroughs in the last decade; nevertheless many problems remain unsolved~\cite{GKLO:16,keevash:14,keevash:18b}.

Let the \emph{degree} $d(\unord{x})$ of a $(k-1)$-set $\unord{x}$ of vertices in a $k$-graph $H$ be the number of edges that contain $\unord{x}$ (observe that this coincides with the usual degree for graphs).
Let $\delta(H)$ be the \emph{minimum degree} of $H$ defined by $\min\bigl\{d(\unord{x})\colon \unord{x}\in \binom{V(H)}{k-1}\bigr\}$.
Note that in order to guarantee an $F$-decomposition, $H$ has to satisfy certain divisibility conditions, including $e(F)|e(H)$;
for graphs, we also need that $gcd(d_F(v)\colon v\in V(F))$ divides $gcd(d_H(v)\colon v\in V(H))$ and whenever $H$ satisfies these conditions, we say that $H$ is \emph{$F$-divisible}.
For $k\geq 3$, the concept of $F$-divisibility is similar but more complex and we refer the reader to~\cite{GKLO:16} for (many) more details.

In this paper, we consider Dirac-type (minimum degree) conditions for $k$-graphs that enforce $F$-decompositions.
In particular, we are interested in the \emph{$F$-decomposition threshold} $\delta_F$ which is defined as the least number $\delta$
such that for every $\mu>0$, there exists an integer $n_0$ such that every $F$-divisible $k$-graph $H$ on $n\geq n_0$ vertices
with $\delta(H)\geq (\delta+\mu)n$ admits an $F$-decomposition.

The $F$-decomposition threshold is very closely linked to the threshold for approximate decompositions and fractional decompositions, respectively.
For $\eta\geq 0$, we say that an \emph{$\eta$-approximate $F$-decomposition} of $H$ is an edge-disjoint collection of copies of $F$ that cover all but at most $\eta n^k$ edges of $H$.
Let $\delta_F^\eta$ be the least number~$\delta$
such that for every $\mu>0$, there exists an integer $n_0$ such that every $k$-graph $H$ on $n\geq n_0$ vertices
with $\delta(H)\geq (\delta+\mu)n$ admits an $\eta$-approximate $F$-decomposition.
Clearly, $\delta_F^\eta$ is monotonically increasing as $\eta$ tends to $0$.
Let $\delta_F^{0+}:=\sup_{\eta>0}\delta_F^\eta$ be the \emph{approximate $F$-decomposition threshold}.

Let $\cF(H)$ be the set of all copies of $F$ in $H$.
We say a function $\omega\colon \cF(H)\to [0,1]$ with $\sum_{F\in \cF(H)\colon e\in E(F)} \omega(F)=1$ for all $e\in E(H)$ is a \emph{fractional $F$-decomposition}
and we define the \emph{fractional~$F$-decomposition threshold $\delta_F^*$} as the least number $\delta$ such that for every $\mu>0$, there exists an integer $n_0$ such that every $k$-graph $H$ on $n\geq n_0$ vertices with $\delta(H)\geq (\delta+\mu)n$ admits a fractional $F$-decomposition.

Haxell and R\"odl \cite{HR:01} showed that $\delta_F^*\geq \delta_F^{0+}$ for graphs and later this was extended to $k$-graphs for arbitrary $k$ in~\cite{RSST:07} using the (hypergraph) regularity lemma.
Therefore\footnote{For the first inequality, we ignore the fact that for integral decompositions we in addition assume that $H$ is $F$-divisible. To avoid this, we could define the fractional/approximate decomposition threshold also only for $F$-divisible $k$-graphs. Since this technicality plays no role in this paper, we stick with the (natural) definitions as above.}, $\delta_F \geq \delta_F^* \geq \delta_F^{0+}$.
However, crucially for many $F$ it is known that all three values coincide.
This reduces the complexity of finding $F$-decompositions.

Let us discuss now some results that relate these different decomposition thresholds.

\begin{theorem}[Barber, K\"uhn, Lo and Osthus~\cite{BKLO:16}] \label{thm:BKLO}
	Let $F$ be an $r$-regular graph.
	Then $\delta_F\leq \max\{\delta_F^{0+},1-1/(3r)\}$.
	Moreover,
	$\delta_{C_4}=2/3$ and $\delta_{C_\ell}=1/2$ for even $\ell\geq 6$.
	For odd $\ell$, we have $\delta_{C_\ell}=\delta_{C_\ell}^{0+}$.
\end{theorem}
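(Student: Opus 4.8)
The heart of the statement is the inequality $\delta_F\le\max\{\delta_F^{0+},1-1/(3r)\}$, which I would establish by absorption. Fix $\mu>0$, set $\delta:=\max\{\delta_F^{0+},1-1/(3r)\}$, and let $G$ be an $F$-divisible graph on $n$ vertices with $\delta(G)\ge(\delta+\mu)n$ and $n$ large. First I would reserve, at random, a sparse \emph{absorbing structure} $R\In G$ so that $G-R$ still has minimum degree at least $(\delta+\mu/2)n$ and so that all the absorbers described below can be found inside $R$. Since $\delta\ge\delta_F^{0+}$, the graph $G-R$ has an $\eta$-approximate $F$-decomposition for any prescribed $\eta>0$; deleting the copies of $F$ it produces leaves a leftover $L$ with $e(L)\le\eta n^2$, and $L\cup R$ is still $F$-divisible, since only complete copies of $F$, all inside $G-R$, were removed. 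It then remains to decompose $L\cup R$; this is the absorption step, and it is here that the hypothesis $\delta(G)\ge(1-1/(3r))n$ is used.

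For the absorption I would work with \emph{$F$-absorbers}: an $F$-absorber for an edge $e$ is a graph $A$, edge-disjoint from $e$, such that both $A$ and $A\cup\{e\}$ admit $F$-decompositions. If, for every edge $e$ of a sufficiently sparse target graph, one can locate pairwise edge-disjoint $F$-absorbers $A_e\In R$ whose union, together with an $F$-divisible remainder, exhausts $R$, then a decomposition of $L\cup R$ is obtained by combining an $F$-decomposition of $A_e\cup\{e\}$ for each edge $e$ of $L$ with one of the remainder. Each $A_e$ can be assembled from a bounded number of copies of $F$, and building it reduces to repeatedly embedding a copy of $F$ through a prescribed edge or short path; that is a greedy embedding in which each newly chosen vertex must lie in the common neighbourhood of up to $r$ already chosen vertices, and a common neighbourhood of $j$ vertices has size at least $\bigl(1-j(1-\alpha)\bigr)n$ when $\delta(G)\ge\alpha n$. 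The requirement that this stay a positive fraction of $n$, with enough room to keep the $\Theta(n^2)$ absorbers pairwise edge-disjoint and to run the bookkeeping that makes every small residual graph $F$-divisible (this is where the $F$-divisibility of $G$ and the regularity of $F$ enter), is what produces a threshold of the form $1-\Omega(1/r)$; a careful, economical construction of the absorbers yields the constant $3$. In practice all of this is organised by an iterative-absorption/vortex scheme with a cover-down step pushing the uncovered edges into ever smaller vertex sets, but the minimum-degree bottleneck is the one just described.

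For the $C_\ell$-statements one has $r=2$, so the general bound only gives $\delta_{C_\ell}\le\max\{\delta_{C_\ell}^{0+},5/6\}$, which is too weak; the improvement comes from a cycle-tailored absorption that lowers the effective absorption threshold to $1/2$. The key observation is that a $C_\ell$-divisible graph is precisely an Eulerian graph with $\ell\mid e(\cdot)$, so the leftover $L$ (being $G$ minus cycles) is again Eulerian with few edges, hence a disjoint union of cycles; and when $\delta(G)\ge(1/2+\mu)n$ the host has, between any two vertices, paths of every prescribed length in a wide range, even after deleting a small set of vertices and edges. I would use this to reroute each short cycle of $L$, together with reserved edges, into copies of $C_\ell$: split cycles at a vertex into paths, extend the paths through the reservoir to the correct length modulo $\ell$, and close them up, all while keeping the pieces edge-disjoint and keeping the reservoir remainder Eulerian with $\ell\mid e(\cdot)$, so that it is itself absorbable (or can be iterated away); a preliminary step first caps the maximum degree of $L$ so that no vertex is overloaded. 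Feeding in the known approximate thresholds then gives the equalities: $\delta_{C_4}^{0+}=2/3$ and $\delta_{C_\ell}^{0+}=1/2$ for even $\ell\ge6$ (the former via a blow-up extremal example and a matching nibble argument, the latter via a minimum-degree-$(1/2+\mu)n$ nibble argument), so $\delta_{C_4}=\max\{2/3,1/2\}=2/3$ and $\delta_{C_\ell}=\max\{1/2,1/2\}=1/2$; and for odd $\ell$, since a suitable complete bipartite graph has minimum degree $n/2$ and no odd cycle, $\delta_{C_\ell}^{0+}\ge1/2$, whence $\delta_{C_\ell}=\max\{1/2,\delta_{C_\ell}^{0+}\}=\delta_{C_\ell}^{0+}$. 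The matching lower bounds $\delta_{C_\ell}\ge1/2$ (for all $\ell$) and $\delta_{C_4}\ge2/3$ come from standard extremal constructions --- for the former, for instance a $C_\ell$-divisible disjoint union of two cliques of order about $n/2$ that cannot decompose because neither clique is $C_\ell$-divisible --- together with $\delta_F\ge\delta_F^{0+}$.

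I expect the main obstacle to be the cycle-specific absorption: getting the bound down to $1/2$ rather than $5/6$ requires rerouting that simultaneously respects edge-disjointness, keeps every reservoir remainder inside the Eulerian-plus-$\ell\mid e(\cdot)$ class that makes it self-decomposable, and overloads no vertex, and the bookkeeping here is the delicate part; a secondary difficulty is determining $\delta_{C_\ell}^{0+}$ exactly, in particular the claim that minimum degree $(1/2+\mu)n$ already suffices for an approximate $C_\ell$-decomposition when $\ell\ge6$ is even.
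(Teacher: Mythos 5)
This theorem is not proved in the paper under review; it is quoted from Barber, K\"uhn, Lo and Osthus~\cite{BKLO:16}, so there is no ``paper's own proof'' to compare against. Your sketch does track the broad architecture of the source paper --- reserve a random absorber structure, remove an approximate decomposition, then use iterative absorption with $F$-absorbers (built from copies of $F$ through prescribed edges) to swallow the leftover, with the greedy common-neighbourhood count giving a threshold of the shape $1-\Omega(1/r)$; and for cycles, replace the generic absorbers with a cycle-specific scheme exploiting that the $C_\ell$-divisible class is ``Eulerian with $\ell\mid e(\cdot)$'' to drop the threshold to $1/2$. That much is a fair account of the strategy.

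However, there is a concrete error in the last part of your argument. You write that $\delta_{C_4}^{0+}=2/3$ and $\delta_{C_\ell}^{0+}=1/2$ for even $\ell\ge6$, and then deduce the stated values of $\delta_{C_\ell}$ from the formula $\delta_F\le\max\{\delta_F^{0+},1-1/(3r)\}$. This is not how those equalities arise. For even $\ell$, the cycle $C_\ell$ is bipartite, and (as the paper itself remarks) $\delta_F^{0+}=0$ whenever $F$ is $k$-partite; so $\delta_{C_4}^{0+}=\delta_{C_6}^{0+}=\cdots=0$. Consequently $\max\{\delta_{C_\ell}^{0+},1-1/(3\cdot 2)\}=5/6$ for every even $\ell$, and the general bound alone gives neither $2/3$ nor $1/2$. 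The exact values $\delta_{C_4}=2/3$ and $\delta_{C_\ell}=1/2$ (even $\ell\ge6$) require a sharper, cycle-tailored upper bound than the generic $1-1/(3r)$, together with matching lower bounds that come from divisibility/space-barrier extremal constructions for \emph{integral} decompositions, not from approximate-decomposition obstructions. Your ``two-cliques'' example is of the right divisibility-barrier kind for the $1/2$ lower bound, but you have misattributed it to $\delta^{0+}$; and the $2/3$ lower bound for $C_4$ needs its own extremal construction, which you gesture at but do not identify. For odd $\ell$ you are on firmer ground: $C_\ell$ is not bipartite, balanced complete bipartite graphs force $\delta_{C_\ell}^{0+}\ge1/2\ge1-1/(3r)$, and $\delta_{C_\ell}=\delta_{C_\ell}^{0+}$ indeed follows from the general inequality once the cycle-specific absorption gives an upper bound of $\max\{\delta_{C_\ell}^{0+},1/2\}$.
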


Further significant progress on determining $\delta_F$ is due to Glock, K\"uhn, Lo, Montgomery and Osthus~\cite{GKLMO:19}.
They calculated the decomposition threshold for all bipartite graphs $F$; in particular, their results imply $\delta_F\in \{0,1/2,2/3\}$ in this case.
For graphs $F$ with chromatic number $\chi\geq 5$, they proved that $\delta_F\in \{\delta_F^*,1-1/\chi,1-1/(\chi+1)\}$.

When considering $\delta_F$ for $k$-graphs $F$ with $k\geq 3$ we are immediately in deep waters.
Only recently Keevash~\cite{keevash:14} verified the existence conjecture, that is, he showed that whenever the complete $k$-graph on $n$ vertices $K_n^{k}$ is $K_r^{k}$-divisible and $n$ is large enough, then $K_n^{k}$ has a $K_r^{k}$-decomposition.
Glock, K\"uhn, Lo and Osthus greatly improved this by showing that $K_r^{k}$ can be replaced by any other $k$-graph~$F$~\cite{GKLO:16}.
In addition, they worked in a framework that yields bounds on $\delta_F$ that can be explicitly calculated.

\medskip

Our contribution concerns (tight) cycles.\footnote{Observe that decompositions into tight cycle directly yield decompositions into other types of $k$-graph cycles.}
A \emph{cycle $C_\ell^{k}$ of length $\ell$} is a $k$-graph on $\ell$ vertices whose vertex set can be cyclically ordered such that a $k$-set forms an edge of~$C_\ell^{k}$ if and only if its elements appear consecutively in this ordering (when $k=2$, we also write $C_\ell$ instead of $C_\ell^2$).
Our main result is as follows.

\begin{theorem}\label{thm: main result}
	For every integer $k\geq 2$ and $\eps>0$,
	there exists an $\ell_0$ such that $\delta_{C_\ell^{k}}^*\leq 1/2+\eps$ for all $\ell\geq \ell_0$.
\end{theorem}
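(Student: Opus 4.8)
\emph{Reduction to an almost edge-regular weighting.}
By $\delta_F\geq\delta_F^*\geq\delta_F^{0+}$ it suffices to show that for every $\mu>0$ there is an $n_0$ so that every $k$-graph $H$ on $n\geq n_0$ vertices with $\delta(H)\geq(1/2+\eps+\mu)n$ admits a fractional $C_\ell^{k}$-decomposition. The plan is to aim first at the weaker goal of an \emph{almost} edge-regular weighting: a function $\omega$ from the tight $\ell$-cycles of $H$ to $[0,\infty)$ with $\sum_{C\ni e}\omega(C)\in[1-\gamma,1]$ for all $e\in E(H)$, where $\gamma>0$ is a small constant depending only on $\eps$ and $\ell$. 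From such an $\omega$ one reaches an exact fractional decomposition by the iterative correction by now standard for such problems~\cite{BKLO:16,GKLMO:19}: the deficiency $e\mapsto 1-\sum_{C\ni e}\omega(C)$ is a weighted $k$-graph all of whose weights lie in $[0,\gamma]$, one re-applies a suitably scaled version of the construction, and one sums the geometric series of corrections, the slack $\mu$ providing the room needed at each stage. So the whole problem reduces to building one almost edge-regular weighting.

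\emph{The tight-walk Markov chain.}
I would form the digraph $D=D(H)$ whose vertices are the ordered $(k-1)$-tuples $(x_1,\dots,x_{k-1})$ of distinct vertices of $H$, with an arc from $(x_1,\dots,x_{k-1})$ to $(x_2,\dots,x_{k-1},y)$ whenever $\{x_1,\dots,x_{k-1},y\}\in E(H)$; a tight walk in $H$ is a walk in $D$, and the arc above \emph{emits} the edge $\{x_1,\dots,x_{k-1},y\}$. The key observation is that both the out-degree and the in-degree of the state $(x_1,\dots,x_{k-1})$ equal the degree $d(\{x_1,\dots,x_{k-1}\})$ of the underlying $(k-1)$-set, so $D$ is Eulerian; hence the simple random walk on $D$ (step to a uniformly random out-neighbour) has stationary distribution $\pi$ with $\pi((x_1,\dots,x_{k-1}))\propto d(\{x_1,\dots,x_{k-1}\})$, and under $\pi$ \emph{every arc of $D$ is traversed with the same probability}. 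Since each edge of $H$ is emitted by exactly $k!$ arcs, a stationary tight walk emits at every step a uniformly random edge of $H$ — regardless of how far $H$ is from being regular. This is precisely the mechanism by which a codegree condition alone (rather than near-regularity of $H$) can force an edge-balanced family of cycles.

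\emph{From stationary walks to cycles.}
A tight $\ell$-cycle is the same thing as a closed walk of length $\ell$ in $D$ whose $\ell$ emitted vertices are distinct. I would sample a random tight $\ell$-cycle by running the stationary chain for $\ell$ steps and conditioning on its returning to the start and on the $\ell$ traversed vertices being distinct (the latter failing with probability $O(\ell^2/n)$). Every edge emitted away from the $O(k)$ boundary steps around the wrap-around is uniform before conditioning; if the conditioning perturbs the probability that a fixed edge appears by at most a factor $1\pm\gamma$, then rescaling the law of this random cycle by the common appearance probability yields the required almost edge-regular weighting, provided $\ell\geq\ell_0(k,\eps)$ is large enough that the $O(k/\ell)$ boundary error and the conditioning error are both small and the correction of the previous step converges.

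\emph{The main obstacle.}
The crux is controlling the conditioning-on-return. This is a condition relating the two ends of the walk, so bounding its effect on a single emitted edge amounts to decorrelating those two ends, which needs a quantitative form of mixing of the chain. The codegree hypothesis does give $D$ a conductance, and hence a spectral gap, bounded below by a positive function of $k$ and $\eps$ (by a Cheeger-type argument: a state set with small arc-boundary must contain, for most of its states, almost all of their $\geq(1/2+\eps)n$ out-neighbours, and this propagates) — this is the ``rapidly mixing Markov chain'' input the paper advertises. The difficulty is that in the pointwise / relative sense the mixing time can be as large as $\Theta(\log n)$ — this already happens for a graph that is the union of two cliques overlapping in a small linear-sized vertex set — which, since $\ell$ is fixed while $n\to\infty$, far exceeds the length of the walk itself. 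The proof therefore has to be arranged so that only a weak form of mixing is used: for instance a spectral estimate for the $\ell$-step return probability that does not require pointwise mixing, or extracting the $\ell$-cycles from a single very long stationary closed walk (so that it is the aggregate weighting, rather than a single random cycle, that must be near-regular and fluctuations average out), all while keeping the $\ell$ traversed vertices distinct so that the closed walk is an honest cycle. Getting this balance right is where essentially all the work lies; the reduction of the first step, and — for $k=2$ — the further passage to integral decompositions, are comparatively routine.
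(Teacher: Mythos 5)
Your sketch identifies the right mechanism --- a Markov chain on tight walks plus rapid mixing --- but stalls at the step you flag as the crux, and the obstacle you pose there does not actually arise. You fear the relative/pointwise mixing time is $\Theta(\log n)$, far exceeding the fixed length $\ell$; but because the out-degree of the walk is $\Theta(n)$ rather than $O(1)$, the law of the chain is already within \emph{constant} multiplicative factors of stationary after only $O(1)$ steps, from which exponentially fast relative mixing follows in time independent of $n$. Concretely, the paper first derives from the codegree condition a combinatorial ``$(\alpha,\ell_0)$-connectedness'' property with $\ell_0=k^2-k+2$ (Lemma~\ref{remark: connecting}), regularises the walk by a random $r$-regular \emph{transition system} $\cT$ --- which makes every step have exactly $r$ out-choices, so the stationary distribution is uniform on ordered edges and, crucially, the number of $\cT$-compatible $\ell$-walks from $\orderededge{s}$ to $\orderededge{t}$ equals $r^{\ell-1}$ times a transition probability --- and shows the connectedness survives the regularisation (Lemma~\ref{lemma: transition system connecting}). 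That property says precisely that every entry of the $(\ell_0-1)$-step kernel lies in $[\alpha,\varepsilon^{-k}]$ times the stationary probability; Lemma~\ref{lemma: mixing} then gives relative mixing of the $(\ell_0-1)$-step power in $O(\alpha^{-1}\log(1/\varepsilon))$ further steps. (Your two-cliques example has this property too: after a few steps its kernel is uniformly comparable to stationary, so the $\log n$ worry is a red herring.) Thus the genuinely missing ideas are (i) the transition-system regularisation, which both uniformises the stationary law and turns walk counts into an exact power of $r$ times a transition probability, and (ii) the observation that dense connectivity makes the $O(1)$-step kernel comparable to stationary, collapsing the mixing time to a constant.

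Two smaller divergences are worth recording. Your chain lives on ordered $(k-1)$-tuples with degree-proportional stationary law; the ``each arc is traversed equally, hence each edge emitted uniformly'' observation is elegant, but it does not by itself control the conditional law of an emitted edge given return and self-avoidance (which is what counting cycles through a fixed edge requires), and with variable branching the clean link between transition probabilities and \emph{numbers} of walks is lost. Finally, the paper reaches an exact fractional decomposition not by a geometric-series iteration but by a single global correction via ``transporters'' (Section~\ref{sec:transporter}): closed walks designed so that adjusting the weights of their sending and receiving cycles shifts weight from one edge to another while leaving every other edge sum unchanged, with the sizes of these adjustments again bounded using the walk counts from the mixing lemma.
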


Note that for graphs, by Theorem~\ref{thm:BKLO} and as $\delta_F \geq \delta_F^* \geq \delta_F^{0+}$, we can replace $\delta_{C_\ell}^*$ by $\delta_{C_\ell}$ in Theorem~\ref{thm: main result}.
Our main result also answers a question (in a strong form) by Glock, K\"uhn and Osthus who asked whether the statement of the theorem holds for $k=2$. 

As we mentioned above, for $k\geq 3$ decomposition problems are much more complicated than for graphs and 
thus is it not surprising that beside the fact that $\delta_F^{0+}=0$ whenever $F$ is $k$-partite almost nothing is known about the exact values of $\delta_F^{0+},\delta_F^*,\delta_F$.
Our main result implies almost tight bounds on $\delta_{C_\ell^k}^*$ and $\delta_{C_\ell^k}^{0+}$ 
because whenever $\ell$ is not divisible by $k$, we have $\delta_{C_\ell^k}^{0+}\geq \frac{1}{2}+\frac{1}{(k-1+2/k)(\ell-1)}$ (see Example~\ref{example} in Section~\ref{sec:2}).

In view of further applications we prove a more general statement than in Theorem~\ref{thm: main result}.
In many scenarios,
we would like to find a (fractional/approximate) $F$-decomposition of a typical random thin edge slice of a $k$-graph that admits such an $F$-decomposition.
Hence we need an assumption on the $k$-graph that is robust with respect to taking random edge slices and covers the minimum degree condition in Theorem~\ref{thm: main result}.
A property which has this ability is  $(\alpha,\ell)$-connectedness, which we introduce below.

In addition, our fractional decompositions have the property that all weights on the $\ell$-cycles are equal up to constant factors.
This is another useful property for applications.

For a $k$-graph $H$ with vertex set~$V$ and edge set~$E$ on $n$ vertices, 
we write~$\orderededge{E}(H)$ for the set of tuples~$(v_1,\ldots,v_k)\in V^k$ such that~$\{v_1,\ldots,v_k\}\in E$ and we define~$\orderededge{e}(H):=\abs{\orderededge{E}(H)}$.
A \defn{walk}~$W$ of length $\ell$ in $H$ is a sequence $\orderededge{e}_1\ldots \orderededge{e}_{\ell}$ of elements of~$\orderededge{E}(H)$ such that there is a sequence $v_1\ldots v_{\ell+(k-1)}$ of vertices of $H$ with $\orderededge{e}_i=(v_i,\ldots,v_{i+(k-1)})$ for all $i\in[\ell]$.
We call~$W$ a walk from~$\orderededge{e}_1$ to~$\orderededge{e}_\ell$.
We say that $H$ is \defn{$(\alpha,\ell)$-connected} if for all~$\orderededge{e},\orderededge{f}\in \orderededge{E}(H)$, 
there are at least~$\alpha n^{\ell-1}/\orderededge{e}(H)$ walks from~$\orderededge{e}$ to~$\orderededge{f}$ in~$H$ that have length~$\ell$. 

\begin{theorem}\label{thm: stronger version}
	For all~$\alpha\in(0,1)$,~$\mu\in(0,1/3)$ and integers~$\ell,k\geq 2$, there is an~$n_0$ such that the following holds for all~$n\geq n_0$. Suppose~$H$ is an~$(\alpha,\ell_0)$-connected~$k$-graph on~$n$ vertices for some $\ell_0\geq k+1$ with~$ 180k\frac{\ell_0}{\alpha}\log\frac{\ell_0}{\alpha}\log\frac{1}{\mu}\leq \ell$. Then there is a fractional~$C^k_{\ell}$-decomposition~$\omega$ of~$H$ with
	\begin{equation*}
		(1-\mu)\frac{2e(H)}{\Delta(H)^{\ell}}\leq\omega(C)\leq (1+\mu)\frac{2e(H)}{\delta(H)^{\ell}}
	\end{equation*}
	for all $\ell$-cycles~$C$ in~$H$.
\end{theorem}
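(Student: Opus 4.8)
The plan is to realise the desired fractional decomposition as the stationary distribution of a random walk on the ordered edge set $\orderededge{E}(H)$, turned into a measure on $\ell$-cycles. First I would consider the Markov chain on $\orderededge{E}(H)$ whose state space is the set of ordered edges, and whose transition rule from $(v_1,\ldots,v_k)$ picks a uniformly random vertex $v_{k+1}$ such that $\{v_2,\ldots,v_{k+1}\}\in E(H)$ and moves to $(v_2,\ldots,v_{k+1})$; a length-$\ell$ trajectory of this chain is exactly a walk of length $\ell$, and a \emph{closed} walk of length $\ell$ that uses $\ell$ distinct vertices is an $\ell$-cycle. The $(\alpha,\ell_0)$-connectedness hypothesis is precisely the robust mixing-type input: it guarantees that from any ordered edge, after $\ell_0$ steps, every ordered edge is reached with probability at least $\alpha/\orderededge{e}(H)$ times the reciprocal of the natural volume, i.e.\ the $\ell_0$-step transition matrix has a uniform lower bound proportional to the stationary vector. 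By a standard Dobrushin/Doeblin coupling argument this yields geometric mixing: after $t$ steps the distribution is within total variation distance $(1-\alpha')^{\lfloor t/\ell_0\rfloor}$ of stationarity, and the stationary distribution $\pi$ on $\orderededge{E}(H)$ is close to the degree-proportional distribution (exactly proportional to $d(v_1,\ldots,v_{k-1})$ in the regular case, and within a $(1\pm\mu)$ window of $1/\orderededge{e}(H)$ under our minimum/maximum degree comparison, which the bound $\delta(H)^\ell$ versus $\Delta(H)^\ell$ in the statement is tracking).

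Next I would define the weight of an $\ell$-cycle $C$ to be (a suitable normalisation of) the probability that a stationary closed walk of length $\ell$ traces out $C$ — equivalently, start the chain from stationarity, run it $\ell$ steps, condition on returning to the starting ordered edge, and on the $\ell$ vertices being distinct. The key point is that for a fixed edge $e=\{u_1,\ldots,u_k\}$, summing these weights over all $\ell$-cycles through $e$, with $e$ appearing in all $k!/(\text{automorphisms})$ cyclic positions, telescopes: by stationarity the chance that the walk passes through any particular ordered version of $e$ at a given step is exactly $\pi$ of that ordered edge, and summing over the $\ell$ positions and over orderings gives a quantity independent of $e$ up to the $(1\pm\mu)$ errors. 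Dividing through by this common value produces a function $\omega$ with $\sum_{C\ni e}\omega(C)=1$ for every $e$. The two things that must be controlled here are: (i) the probability that a stationary closed $\ell$-walk is \emph{not} a cycle, i.e.\ repeats a vertex — this is $O(\ell^2/n)$ by a union bound over pairs of positions and the degree bounds, hence negligible when $n$ is large in terms of $\ell$; and (ii) that the conditioning on "returning to the start after $\ell$ steps" does not distort the position-marginals, which is where geometric mixing is used: the requirement $180k\frac{\ell_0}{\alpha}\log\frac{\ell_0}{\alpha}\log\frac1\mu\le\ell$ ensures the mixing time is at most, say, $\ell/6$, so that the middle stretch of the walk has decorrelated from both endpoints and the return probability is $(1\pm\mu)$ times $\orderededge{e}(H)/$(normalising count of closed $\ell$-walks), uniformly over the starting ordered edge.

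The main obstacle I anticipate is step (ii) combined with getting the quantitative window in the conclusion exactly as stated: one has to show not merely that closed $\ell$-walks are roughly equidistributed over their starting ordered edges, but that the \emph{number} of closed $\ell$-walks through a fixed ordered edge lies in a $(1\pm\mu)$ band determined by $\delta(H)$ and $\Delta(H)$, and that this band is inherited by $\omega(C)$ after the normalisation. Concretely, I would estimate the number of length-$\ell$ walks starting and ending at a fixed $\orderededge{e}$ by writing it as $\orderededge{e}(H)\cdot\pi(\orderededge{e})\cdot(\text{total number of closed }\ell\text{-walks})\cdot(1+o(1))$ using the mixing bound twice (once for the first $\ell/6$ steps away from $\orderededge{e}$, once for the last $\ell/6$ steps back to it, with the middle in near-stationarity), then compare $\pi(\orderededge{e})$ to $1/\orderededge{e}(H)$ via the degree sequence: each step of the chain has out-weight between $1/\Delta(H)$ and $1/\delta(H)$, and iterating this over $\ell$ steps gives the $\Delta(H)^{-\ell}$ and $\delta(H)^{-\ell}$ factors, with the factor $2$ accounting for the two orientations of each $\ell$-cycle (equivalently the dihedral symmetry collapsing to a factor $\ell\cdot 2$ in the numerator against $\ell$ positions of $e$). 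The remaining work — verifying that the error terms from mixing, from non-simplicity of walks, and from the degree comparison all fit inside a single $(1\pm\mu)$ envelope given the stated lower bound on $\ell$ — is a routine if slightly delicate bookkeeping exercise, and I would organise it by first proving a clean lemma that the $\ell_0$-step chain is $(1-\alpha/2)$-contracting in total variation, then feeding that into the closed-walk count.
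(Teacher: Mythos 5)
Your approach differs from the paper's in one interesting way and has one genuine gap. The paper introduces \emph{transition systems}: for each $(k-1)$-set $\unord{x}$ it chooses an $r$-regular graph on the edges through $\unord{x}$ and restricts to $\cT$-compatible walks, precisely so that the resulting random walk on ordered edges is regular (out-degree exactly $r$), which cleanly equates ``number of $\ell$-walks from $\orderededge{e}$ to $\orderededge{f}$'' with ``$r^{\ell-1}$ times the $(\ell-1)$-step transition probability.'' The paper then puts \emph{uniform} weight on $\cT$-compatible cycles. You instead work with the natural walk with variable out-degree $d(\unord{x})$ and observe, correctly, that the uniform distribution on $\orderededge{E}(H)$ is still stationary (the chain is doubly stochastic), and you compensate for the non-regularity by weighting each cycle $C$ by its traversal probability $1/D(C)$ where $D(C)$ is the product of the $\ell$ link degrees around $C$; this directly explains the band $[2e(H)/\Delta(H)^\ell,\,2e(H)/\delta(H)^\ell]$ without the paper's step of averaging over $n$ independent random transition systems. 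That is a genuinely different normalisation and would, if it worked, be a pleasant simplification.

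The gap is at the very point you dismiss as ``routine bookkeeping.'' Your argument (stationarity plus fast mixing plus discarding non-simple closed walks) shows that $\sum_{C\ni e}\omega(C)$ is the same for every $e\in E(H)$ only \emph{up to a $(1\pm\text{small})$ multiplicative error}, coming from the conditioning on return to the start and from the $O(\ell^2/n)$ non-simple walks. You then write ``dividing through by this common value produces a function $\omega$ with $\sum_{C\ni e}\omega(C)=1$ for every $e$,'' but there is no such common value: $\sum_{C\ni e}\omega(C)$ depends on $e$, and a global rescaling of $\omega$ cannot equalise it since different edges would demand different scalars. A fractional decomposition requires exact equality to $1$, and no amount of extra mixing time makes the deviations vanish identically. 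The paper devotes all of Section~\ref{sec:transporter} to closing exactly this gap: it constructs \emph{transporters}, families of $k\ell$ interlocked cycles through a chosen pair of edges $s,t$ whose sending and receiving cycles have cancelling edge-incidences everywhere except at $s$ and $t$, so that shifting a tiny amount of weight between them moves weight from $s$ to $t$ without disturbing any other edge; Lemma~\ref{lemma: transport plan} then encodes the residuals $\xi(e)=\sum_{C\ni e}\omega_0(C)-1$ as a flow problem in an auxiliary digraph and Lemma~\ref{lemma: adjustments} verifies that the transporter-weights needed to cancel $\xi$ are small enough to keep all cycle-weights non-negative and within the stated band. This correction mechanism is a separate idea, not a bookkeeping refinement of the Markov chain estimate, and its absence is what keeps your proposal from being a complete proof.
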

The fact that Theorem~\ref{thm: stronger version} indeed implies Theorem~\ref{thm: main result} is an immediate consequence of Lemma~\ref{remark: connecting} below
as it implies that $\delta(H)\geq \frac{1+\alpha}{2}n$ yields $(\alpha^{3k!},k^2)$-connectedness.

In another article together with Sch\"ulke,
we employ Theorem~\ref{thm: stronger version} to prove a strong generalization of the well-known result due to R\"odl, Ruci\'nski and Szemer\'edi~\cite{RRS:08}.
We show that every $k$-graph~$H$ on~$n$ vertices with minimum degree $\delta(H)\geq (1/2+o(1))n$ not only contains one Hamilton cycle but essentially as many edge-disjoint Hamilton cycles as $H$ may potentially have,
namely, the largest $p$ for which $H$ has a spanning subgraph where every vertex is contained in~$kp$ edges.

\medskip

Next we give an overview of the proof of Theorem~\ref{thm: stronger version}. 
When dealing with fractional decompositions many approaches have considered so far
the strategy to start with the (appropriately scaled) uniform distribution on $\cF(H)$ and then turn this into a fractional decomposition by shifting some weight around the graph.
In this case, the crucial question is in how many copies of $F$ does a particular edge~$e$ lie because the total weight on~$e$ will be proportional to this quantity.
(From now on $F$ is a cycle of length $\ell$.)
However, when dealing with $k$-graphs of minimum degree close to $n/2$ this may vary significantly and starting with the uniform distribution on $\cF(H)$ seems hopeless to us.

Our main contribution is a novel approach to overcome this problem.
In Section~\ref{sec:initial weight}, we first find a set of $\ell$-cycles $\cC\subseteq \cF(H)$ such that every edge of~$H$ lies in roughly the same number of elements of $\cC$.
Consequently, a scaled uniform distribution on~$\cC$ almost yields a fractional~$F$-decomposition.
In a final step (see Section~\ref{sec:transporter}), we adjust the weight distribution slightly and find the desired fractional decomposition.
So how do we find $\cC$?
This can be done as follows. 
Fix an ordered $(k-1)$-set $\unord{x}$.
Among all edges that contain $\unord{x}$, we introduce certain restrictions; that is, for some $u,v$ that form each together with $\unord{x}$ an edge, 
we forbid that a cycle contains the vertices $u,\unord{x},v$ (or vice versa) in this order. We proceed similarly for each~$\unord{x}\in\unordsubs{V(H)}{k-1}$.
Let~$\cC$ be the set of all cycles that respect all these imposed restrictions.

The restrictions will be constructed in such way that for each $u,\unord{x}$, there is always the same number of choices $r$, say, for the next vertex of the cycle.
Consequently, starting with a $(k-1)$-set $\unord{x}$, there are almost exactly $r^m$ choices (we have to avoid repetitions) for the subsequent $m$ vertices.

Construct an auxiliary graph $J$ as follows; the vertex set of $J$ is the set of all ordered edges of $H$
and two vertices $\orderededge{e},\orderededge{f}$ are adjacent if there is an ordering of $k+1$ vertices of $H$ such that~$\orderededge{e}$ coincides with the first $k$ vertices and $\orderededge{f}$ with the last $k$ vertices and our restrictions allow to transition from $\orderededge{e}$ to $\orderededge{f}$.

Now consider a simple random walk on $J$.
Then the $m$-step transition probability from $\orderededge{e}$ to $\orderededge{f}$ multiplied by $r^m$ essentially equals the number of paths from $\orderededge{e}$ to $\orderededge{f}$.
One of the key steps in our argument is that this random walk mixes rapidly. 
From this it is not hard to deduce that every edge of $H$ is contained in roughly the same number of cycles in $\cC$.

Let us also mention that we avoid the application of the arguably very technical hypergraph regularity lemma, which usually is applied when considering decomposition questions, and hence our proofs are comparably short and without much clutter.

\section{Preliminaries}\label{sec:2}
\subsection{Notation} 
For an integer~$n\geq 1$, we define \defn{$[n]:=\{1,\ldots,n\}$} and \defn{$[n]_0:=[n]\cup \{0\}$}.
For a set $A$, we say that $A$ is a \defn{$k$-set} if $\abs{A}=k$ and 
we define \defn{$\unordsubs{A}{k}:=\{ A'\subseteq A\colon \abs{A'}=k \}$}.
We often use $\unord{x},\unord{y}$ to refer to sets and $\ordered{x},\ordered{y}$ when considering tuples; however, if the tuple arises from ordering the vertices of an edge, then we use $\orderededge{e},\orderededge{f},\orderededge{g},\orderededge{h},\orderededge{s},\orderededge{t}$.
We may subsequently drop the arrow to denote the underlying set of an ordered set, so that for an ordered set~$\ordered{x}=(x_1,\ldots,x_k)$, we have~$\unord{x}=\{x_1,\ldots,x_k\}$.
We generally identify a sequence $x_1\ldots x_n$ with the tuple $(x_1,\ldots,x_n)$.
An \defn{ordering} of a $k$-set $\unord{x}=\{x_1,\ldots,x_k\}$ is a sequence $x_1\ldots x_k$ without repetitions.  

For real numbers $\alpha,\beta,\delta,\delta'$, we write \defn{$\alpha=(1\pm \delta)\beta$} whenever $(1-\delta)\beta\leq \alpha\leq (1+\delta)\beta$ and \defn{$(1\pm \delta)\alpha=(1\pm \delta')\beta$} whenever $(1-\delta')\beta\leq (1- \delta)\alpha\leq (1+\delta)\alpha\leq(1+\delta')\beta$.
We write \defn{$\alpha\ll\beta$} to mean that there is a non-decreasing function~$\alpha_0\colon(0,1]\rightarrow(0,1]$ such that for any $\beta\in(0,1]$, the subsequent statement holds for~$\alpha\in(0,\alpha_0(\beta)]$.
Hierarchies with more constants are defined similarly and should be read from right to left.
Constants in hierarchies will always be real numbers in $(0,1]$.
Moreover, if $1/n$ appears in a hierarchy, this implicitly means that $n$ is a natural number.
We ignore rounding issues when they do not affect the argument.

Let $H$ be a $k$-graph on $n$ vertices, where~$k\geq 2$. Whenever we use~$k$ to refer to the uniformity of a hypergraph we tacitly assume that~$k\geq 2$.
We write \defn{$V(H)$} for the vertex set and \defn{$\unordedge{E}(H)$} for the edge set of $H$.

A walk $W$ of length $\ell$, or simply an $\ell$-\defn{walk}, in $H$ is a sequence $\orderededge{e}_1\ldots \orderededge{e}_{\ell}$ of elements of $\orderededge{E}(H)$ such that there is a sequence $v_1\ldots v_{\ell+(k-1)}$ of vertices of $H$ with $\orderededge{e}_i=(v_i,\ldots,v_{i+(k-1)})$ for all $i\in[\ell]$; we call this sequence of vertices the \defn{vertex sequence} of $W$ and define \defn{$V(W):=\{v_1,\ldots, v_{\ell+(k-1)}\}$} and~\defn{$E(W):=\{e_{i}\colon i\in[\ell]\}$}.
We say that $W$ is \defn{self-avoiding} if $v_i\neq v_j$ for all distinct $i,j\in[\ell]$ and we say that~$W$ is \defn{internally self-avoiding} if~$e_1\ldots e_{\ell-k}$ and~$e_{k+1}\ldots e_{\ell}$ are self-avoiding.
We call~$W$ \defn{closed} if~$\orderededge{e}_1=\orderededge{e}_{\ell}$.
We say a cycle~$C$ is given by an internally self-avoiding closed walk~$\orderededge{e}_1\ldots \orderededge{e}_{\ell}$ if~$E(C)=\{e_1,\ldots, e_{\ell}\}$.
We write~\defn{$\cC_{\ell}(H)$} for the set of~$\ell$-cycles in~$H$.

\subsection{Chernoff's inequality} 
We will also need the following standard concentration inequality. 
\begin{lemma}\label{lemma: Chernoff}
	Let $ X_1,\ldots,X_n$ be independent Bernoulli random variables with parameter $p$ and let~$X:=\sum_{i\in[n]} X_i$.
	Then for any $ \delta\in(0,1]$,
	\begin{equation*}
		\pr[|X-\ex[X]| \geq  \delta\ex[X]]\leq 2\exp\biggl( -\frac{\delta^2}{3}\ex[X] \biggr).
	\end{equation*}
\end{lemma}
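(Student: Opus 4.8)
The plan is to use the standard exponential-moment (Bernstein--Chernoff) method: bound the upper and lower tails separately via Markov's inequality applied to $e^{tX}$, optimise the free parameter, and combine the two estimates with a union bound to obtain the factor $2$.

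First I would set $\mu:=\ex[X]=np$ and, for a parameter $t>0$ to be chosen, apply Markov's inequality to the nonnegative random variable $e^{tX}$. Since the $X_i$ are independent,
\begin{equation*}
\pr[X\geq (1+\delta)\mu]\leq e^{-t(1+\delta)\mu}\,\ex[e^{tX}]=e^{-t(1+\delta)\mu}\prod_{i\in[n]}\ex[e^{tX_i}]=e^{-t(1+\delta)\mu}\bigl(1-p+pe^{t}\bigr)^{n}.
\end{equation*}
Using $1+x\leq e^{x}$ with $x=p(e^{t}-1)$ gives $(1-p+pe^{t})^{n}\leq \exp\bigl(\mu(e^{t}-1)\bigr)$, and then the choice $t:=\ln(1+\delta)$ yields
\begin{equation*}
\pr[X\geq (1+\delta)\mu]\leq \exp\Bigl(\mu\bigl(\delta-(1+\delta)\ln(1+\delta)\bigr)\Bigr).
\end{equation*}

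The only genuine calculation is to show that $g(\delta):=\delta-(1+\delta)\ln(1+\delta)+\delta^{2}/3\leq 0$ on $[0,1]$, which converts the last display into the upper-tail bound $\exp(-\delta^{2}\mu/3)$. I would verify this by elementary one-variable calculus: $g(0)=0$, $g'(\delta)=\tfrac{2}{3}\delta-\ln(1+\delta)$ with $g'(0)=0$, and $g''(\delta)=\tfrac{2}{3}-\tfrac{1}{1+\delta}$, which is negative for $\delta<1/2$ and positive for $\delta>1/2$; since moreover $g'(1)=\tfrac{2}{3}-\ln 2<0$, the derivative $g'$ never returns to $0$ on $(0,1]$, so $g$ is nonincreasing there and $g(\delta)\leq g(0)=0$. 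Note this is exactly the range $\delta\leq 1$ in which such estimates hold, explaining the hypothesis $\delta\in(0,1]$.

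For the lower tail I would run the same argument with a negative parameter, i.e.\ bound $\pr[X\leq (1-\delta)\mu]=\pr[e^{-sX}\geq e^{-s(1-\delta)\mu}]$ for $s>0$; the analogous optimisation with $s:=-\ln(1-\delta)$ gives $\pr[X\leq (1-\delta)\mu]\leq \exp\bigl(\mu(-\delta-(1-\delta)\ln(1-\delta))\bigr)$, and the inequality $-\delta-(1-\delta)\ln(1-\delta)\leq -\delta^{2}/2\leq -\delta^{2}/3$ follows at once from the power series $-\delta-(1-\delta)\ln(1-\delta)=-\sum_{j\geq 2}\delta^{j}/(j(j-1))$ (valid for $\delta\in[0,1)$, extending to $\delta=1$ by continuity or by a direct estimate). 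Adding the two tail bounds then gives $\pr[|X-\mu|\geq \delta\mu]\leq 2\exp(-\delta^{2}\mu/3)$, which is the claim. There is no real obstacle; the only point requiring attention is the monotonicity of $g$ on $[0,1]$, and the restriction $\delta\leq 1$ is precisely what makes both elementary estimates go through.
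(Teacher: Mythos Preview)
Your proof is correct and is the standard exponential-moment (Chernoff--Bernstein) argument; all the calculus checks go through as you describe. Note that the paper does not actually prove this lemma --- it is merely stated as a ``standard concentration inequality'' and quoted without proof --- so there is nothing to compare against, but your write-up is exactly the canonical derivation one would expect.
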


\subsection{A lower bound on the (fractional) cycle decomposition threshold}

The following example shows that the bound in Theorem~\ref{thm: main result} cannot be improved by much.

\begin{example}\label{example}
	Let $1/n\ll \zeta \ll \delta\ll 1/k, 1/\ell$, where~$\ell\not\equiv 0\mod k$ and~$n$ is even.
	Suppose~$\eps=\frac{1}{(k-1+2/k)(\ell-1)}-\delta$, let~$A$ and~$B$ be disjoint sets of size $n/2$, define $V:=A\cup B$ and for~$i\in[k]_0$, let~$E_i$ be the set of edges~$e$ of the complete~$k$-graph on~$V$ with $\abs{e\cap A}=i$. 
	Suppose every edge in~$E_0\cup E_2$ is selected independently at random with probability~$2(1+\zeta)\eps$. 
	By Lemma~\ref{lemma: Chernoff}, with positive probability, the set of selected edges is the edge set of a $k$-graph~$H_{0,2}$ with vertex set $V$ and $d(\unord{x})\geq \eps n +k$ for all $(k-1)$-sets $\unord{x}\subseteq V$ with $\abs{\unord{x}\cap A}\leq 2$ as well as\COMMENT{%
		Note: $2(1+2\zeta)\eps\left(\binom{n/2}{k}+\binom{n/2}{2}\binom{n/2}{k-2}\right)
		\leq 2(1+2\zeta)\eps\left(\frac{n^k}{2^k k!}+\frac{n^k}{2^{k+1}(k-2)!}\right)
		= (1+2\zeta)\eps\left( \frac{2}{k}+k-1 \right)\frac{n^k}{2^k (k-1)!}
		=(1+2\zeta)\left(\frac{1}{\ell-1}-\delta\left( \frac{2}{k}+k-1 \right)\right)\frac{n^k}{2^k (k-1)!}
		\leq (1+2\zeta)\frac{1}{\ell-1}\frac{n^k}{2^k (k-1)!}-\delta\frac{n^k}{2^k (k-1)!}
		\leq (1+4\zeta)\frac{1}{\ell-1}\binom{n/2}{1}(1-\zeta)\frac{n^{k-1}}{2^{k-1} (k-1)!}-\delta^{3/2}n^k
		\leq (1+4\zeta)\frac{1}{\ell-1}\binom{n/2}{1}\binom{n/2}{k-1}-\delta^{3/2}n^k
		=(1+4\zeta)\frac{\abs{E_1}}{\ell-1}-\delta^{3/2}n^k
		\leq\frac{\abs{E_1}}{\ell-1}-\delta^{3/2}n^k+4\zeta n^k
		\leq\frac{\abs{E_1}}{\ell-1}-\delta^{3/2}n^k/2
		\leq\frac{\abs{E_1}}{\ell-1}-\delta^{2}n^k
		\leq\frac{\abs{E_1}-\delta^{2}n^k}{\ell-1} $
	}
	\begin{align*}
		e(H_{0,2})
		&\leq 2(1+2\zeta)\eps \left(\binom{n/2}{k}+\binom{n/2}{2}\binom{n/2}{k-2}\right)\\
		&\leq (1+2\zeta)\eps\left(k-1+\frac{2}{k}\right)\frac{n^k}{2^{k}(k-1)!}
		< \frac{\abs{E_1}- \delta^2n^k}{\ell-1}.
	\end{align*}
	Let~$H_{0,2}$ be such a~$k$-graph and let~$H:=(V,E(H_{0,2})\cup\bigcup_{i\in[k]\setminus\{2\}}E_i)$.
	Then $\delta(H)\geq \left(\frac{1}{2}+\eps\right)n$ holds.
	
	Note the following: Suppose~$v_1\ldots v_{\ell'}$ is the vertex sequence of a walk in~$(V,E_1)$, then~$v_i\in B$ implies~$v_{j}\notin B$ if~$i<j\leq i+k-1$ as well as~$v_{j}\in B$ if~$j=i+k$. Thus, since~$\ell\not\equiv 0\mod k$, every~$\ell$-cycle in $H$ that has an edge in $E_1$ must have at least one edge that is an edge of $H_{0,2}$. For every function $\omega\colon \cC_{\ell}(H)\rightarrow[0,1]$ with $\sum_{C\in \cC_{\ell}(H)\colon e\in E(C)}\omega(C)\leq 1$ for all $e\in E(H)$, this implies
	\begin{align*}
		\sum_{e\in E_1}\sum_{\substack{C\in \cC_{\ell}(H)\colon\\e\in E(C)}}\omega(C)
		&\leq (\ell-1) \sum_{\substack{C\in \cC_{\ell}(H)\colon\\E(C)\cap E_1}}\omega(C)\\
		&\leq (\ell-1) \sum_{e\in E(H_{0,2})}\sum_{\substack{C\in \cC_{\ell}(H)\colon\\e\in E(C)}}\omega(C)\\
		&\leq (\ell-1) e(H_{0,2})\\
		&<\abs{E_1}- \delta^2 n^k
	\end{align*}
	and thus $H$ admits neither a fractional nor a $\delta^2$-approximate $C_{\ell}^k$-decomposition.
\end{example}

\subsection{Intersecting hypergraphs}

For~$\unord{x}\in\unordsubs{V(H)}{k-1}$, we define~\defn{$N_H(\unord{x}):=\{ v\in V(H):\unord{x}\cup\{v\}\in E(H) \}$}.
Let us call a~$k$-graph~$H$ on~$n$ vertices~\defn{$\alpha$-intersecting} if~$\abs{N_H(\unord{x})\cap N_H(\unord{y})}\geq \alpha n$ holds for all~$\unord{x},\unord{y}\in\unordsubs{V(H)}{k-1}$; in particular~$H$ is~$\alpha$-intersecting if~$\delta(H)\geq\frac{1+\alpha}{2} n$.
Theorem~\ref{thm: stronger version} applies to $(\alpha,\ell_0)$-connected $k$-graphs, a property which is sometimes slightly complicated to verify.\COMMENT{Check, $\ell$ or $\ell_0$.}
We show in Lemma~\ref{remark: connecting} that all $\alpha$-intersecting $k$-graphs are $(\alpha^{3k!},k^2-k+2)$-connected.
The statement is in fact already proved by R\"odl, Ruci\'nski and Szemer\'edi in~\cite{RRS:08} with a weaker dependence of the parameters and a fairly long proof.
The idea of the short proof below is due to Reiher (unpublished)~\cite{Rei:20} and we give it here to make it publicly available.


\begin{lemma}\label{remark: connecting}
	Let $1/n\ll\alpha,1/k$. Suppose~$H$ is an~$\alpha$-intersecting~$k$-graph on~$n$ vertices. Then~$H$ is~$(\alpha^{3k!},k^2-k+2)$-connected.
\end{lemma}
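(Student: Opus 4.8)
The plan is to show that for any two ordered edges $\orderededge{e},\orderededge{f}\in\orderededge{E}(H)$ there are at least $\alpha^{3k!}n^{k^2-k+1}/\orderededge{e}(H)$ walks of length exactly $\ell_0:=k^2-k+2$ from $\orderededge{e}$ to $\orderededge{f}$. The key structural observation is that the $\alpha$-intersecting property lets us extend a tight walk greedily: if the current last $(k-1)$-set is $\unord{x}$ and we want to eventually arrive so that the last $(k-1)$-set is some target $\unord{y}$, then $N_H(\unord{x})$ and $N_H(\unord{y})$ overlap in at least $\alpha n$ vertices, so there are many common options. Concretely, I would first argue that from any ordered edge $\orderededge{e}$ with underlying $(k-1)$-tail $\unord{x}$, and for any ordered $(k-1)$-set $\ordered{y}$ (with a mild condition ensuring disjointness from the vertices already used, which costs only a lower-order factor), there is a tight walk of length $k$ from $\orderededge{e}$ whose last $(k-1)$-tuple is $\ordered{y}$: pick a vertex $v\in N_H(\unord{x})\cap N_H(\unord{y})$ avoiding the $O(k)$ forbidden vertices, append $v$, then append the entries of $\ordered{y}$ one at a time — each append is legal precisely because $v\cup(\text{prefix of }\ordered{y})$ sits inside $N_H$-neighbourhoods that we can control, using $\alpha$-intersecting repeatedly. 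This gives $\geq(\alpha n - O(k))$ choices at the single free step.

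Next I would chain these length-$k$ "relocations" together. Starting from $\orderededge{e}$, do one relocation to an intermediate ordered $(k-1)$-set $\ordered{z}_1$ chosen freely among roughly $n^{k-1}$ options, giving a factor $\approx\alpha n\cdot n^{k-1}$; repeat to $\ordered{z}_2,\dots$. Since $\ell_0=k^2-k+2=k(k-1)+2$, we can afford $k-1$ such relocations (total length $k(k-1)$) and then a final short segment of length $2$ to land exactly on $\orderededge{f}$. At the last stage we are no longer free: we must hit the prescribed $\orderededge{f}$, but because the penultimate $(k-1)$-set $\ordered{z}_{k-1}$ was chosen among $\approx n^{k-1}$ options, a positive proportion of these choices admit a legal length-$2$ completion to $\orderededge{f}$ — again this is an $\alpha$-intersecting argument applied to $N_H$ of the tail of $\ordered{z}_{k-1}$ and $N_H$ of the appropriate $(k-1)$-subsets of $\orderededge{f}$. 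Multiplying the counts across the $k-1$ free relocations gives on the order of $(\alpha n)^{k-1}\cdot n^{(k-1)(k-1)}$ walks, i.e.\ $\alpha^{k-1}n^{k^2-k}$ walks; dividing by $\orderededge{e}(H)\le n^k$ and absorbing all the $\binom{n}{\text{small}}$-type corrections and the relabelling overhead into the crude bound $\alpha^{3k!}$ (which is far smaller than $\alpha^{k-1}$ once $n$ is large) yields the claimed $\alpha^{3k!}n^{\ell_0-1}/\orderededge{e}(H)$.

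The main obstacle I anticipate is bookkeeping the self-intersections: the definition of walk does not require self-avoidance, so in principle this is easy, but to get a clean multiplicative count one wants each newly appended vertex to avoid the (bounded) set of "recently used" vertices so that the $\alpha$-intersecting estimates stay valid — one must check that the number of vertices that need to be avoided at any step is $O(k^2)$, hence negligible against $\alpha n$ since $1/n\ll\alpha,1/k$, and that these reductions compound to at most a constant factor over the $O(k^2)$ steps. The second delicate point is the final "forced landing" step: one must verify that the set of penultimate states $\ordered{z}_{k-1}$ from which $\orderededge{f}$ is reachable in two steps has size $\Omega(n^{k-1})$ rather than being pathologically small, which again reduces to an $\alpha$-intersecting statement about $N_H$ of tails of $\orderededge{f}$. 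I would handle both by working with the last $(k-1)$-set as the "state" throughout and invoking $\abs{N_H(\unord{x})\cap N_H(\unord{y})}\ge\alpha n$ at each transition, keeping all error terms additive and $O(k^2)$.
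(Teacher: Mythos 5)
There is a genuine gap in the central ``relocation'' claim, and it is not a bookkeeping issue. You assert that from an ordered edge with tail $\unord{x}$ one can reach any prescribed ordered $(k-1)$-set $\ordered{y}$ by a short tight walk, by choosing $v\in N_H(\unord{x})\cap N_H(\unord{y})$ and then appending $v$ followed by the entries of $\ordered{y}$. For $k\geq 3$ this is false, and $\alpha$-intersecting does not repair it. Take $k=3$ with current tail $\{a,b\}$ and target $\ordered{y}=(c,d)$: the walk $\ldots\,a\,b\,v\,c\,d$ requires not only that $\{a,b,v\}$ and $\{v,c,d\}$ be edges (which the choice of $v$ gives) but also that the \emph{intermediate} triple $\{b,v,c\}$ be an edge. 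Nothing in $\alpha$-intersecting controls that set: every intermediate $(k-1)$-set in such a relocation is a mixture of old tail vertices, $v$, and target vertices, and the hypothesis only bounds $|N_H(\unord{x})\cap N_H(\unord{y})|$ for each fixed pair $\unord{x},\unord{y}$, not membership of a prescribed vertex in $N_H$ of these mixed sets. The same problem recurs at your ``forced landing'' step, where you need a positive fraction of penultimate states to reach $\orderededge{f}$ in two steps. Your sketch happens to be correct for $k=2$, where the tail is a single vertex and every intermediate state lies directly under the $\alpha$-intersecting hypothesis; that special case is exactly what makes the plan look plausible, but it collapses at $k=3$.

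The paper's proof addresses precisely this obstruction by a different mechanism: induction on the uniformity $k$ via link hypergraphs. It picks $z\in N_H(\{s_2,\ldots,s_k\})\cap N_H(\{t_1,\ldots,t_{k-1}\})$ and inserts $z$ into the vertex sequence once every $k$ positions. Then \emph{every} intermediate $(k-1)$-set of the resulting walk contains $z$, so each intermediate edge of $H$ is of the form $e\cup\{z\}$ with $e$ an edge of the link $L_z$, and verifying the walk reduces to verifying a walk in the $(k-1)$-graph $L_z$. Since $L_z$ is again $\alpha$-intersecting, the induction hypothesis counts walks in $L_z$ of the right length $\ell_{k-1}=(k-1)^2+1$, and Jensen's inequality (applied to $x\mapsto x^{k-1}$ over the $k-1$ independent choices of insertion vertices) converts this into the desired lower bound for $\ell_k$-walks. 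That inserted common vertex is the ingredient that controls the intermediate $(k-1)$-sets, and it is the ingredient missing from your greedy relocation argument. Without something playing that role, the multiplicative count you write down does not correspond to actual walks in $H$.
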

\begin{proof}
	We will use induction on~$k$ to show the following.
	\begin{align*}
		\begin{minipage}[c]{0.85\textwidth}\em
			Suppose~$a_{k}=(k-1)!\sum_{i=0}^{k-2} \frac{i+1}{i!}$,~$\ell_k=k^2-k+2$ and~$H$ is an~$\alpha$-intersecting~$k$-graph on~$n$ vertices. Then the number of~$\ell_k$-walks from~$\orderededge{s}$ to~$\orderededge{t}$ in~$H$ is at least~$\alpha^{a_k}n^{\ell_k-k-1}$ for all~$\orderededge{s},\orderededge{t}\in\orderededge{E}(H)$.
		\end{minipage}\ignorespacesafterend
	\end{align*}
	This suffices because for all~$k\geq 2$ and every~$\alpha$-intersecting~$k$-graph~$H$ on~$n$ vertices (where $n$ is sufficiently large), we have~$\orderededge{e}(H)\geq \delta(H)\prod_{i\in[k-2]_0}(n-i)\geq \alpha^2 n^k$ and~$a_k\leq k!(\sum_{i=0}^{\infty} 1/i!)-2\leq 3k!-2$\COMMENT{%
		$a_k=(k-1)!\sum_{i=0}^{k-2} \frac{i+1}{i!}=(k-1)!\biggl(\sum_{i=1}^{k-2} \frac{1}{(i-1)!}+\sum_{i=0}^{k-2} \frac{1}{i!}\biggr)=(k-1)!\biggl(\sum_{i=0}^{k-3} \frac{1}{i!}+\sum_{i=0}^{k-2} \frac{1}{i!}\biggr)\leq (k-1)!\biggl(\sum_{i=0}^{\infty} \frac{1}{i!}+\sum_{i=0}^{\infty} \frac{1}{i!}\biggr)-2\leq (k-1)!\cdot 2\eul-2\leq \frac{6}{k}k!-2\leq 3k!-2$
	}.
	With some foresight we remark that~$a_{k}=(k-1)(a_{k-1}+k-1)$ holds for all~$k\geq 3$\COMMENT{%
		$(k-1)(a_{k-1}+k-1)=(k-1)!\left(\frac{k-1}{(k-2)!}+\sum_{i=0}^{k-3}\frac{i+1}{i!}\right)=(k-1)!\sum_{i=0}^{k-2}\frac{i+1}{i!}=a_k$
	}.
	
	For~$k=2$, we have~$\ell_k=4$ and~$a_k=1$ and thus the desired statement is obviously true since~$H$ is~$\alpha$-intersecting.
	
	Suppose now that $k\geq 3$.
	Let~$V:=V(H)$ and~$(s_1,\ldots,s_k),(t_1,\ldots,t_k)\in\orderededge{E}(H)$. For $z\in V$, consider the indicator function $\bOne_z$ defined on $V^{(k-1)(k-2)}$ where $\bOne_z(\ordered{x})=1$ if and only if~$s_2\ldots s_{k}\ordered{x}t_1\ldots t_{k-1}$ is the vertex sequence of a $((k-1)^2+1)$-walk in the link~$L_z$ of $z$, that is the $(k-1)$-graph~$L_z$ with vertex set $V\setminus\{z\}$ where a $(k-1)$-set~$e\subseteq V\setminus\{z\}$ is an edge of $L_z$ if and only if $e\cup\{z\}$ is an edge of $H$.
	Let
	\begin{equation*}
		N:=N_H(\{s_2,\ldots,s_k\})\cap N_H(\{ t_1,\ldots,t_{k-1} \}).
	\end{equation*}
	Note that for all~$\ordered{x}=(x_1,\ldots,x_{(k-1)(k-2)})\in V^{(k-1)(k-2)}$, we can obtain the vertex sequence~$\orderededge{s}\ordered{y}\orderededge{t}$ of an~$\ell_k$-walk in~$H$ by inserting vertices~$z\in N$ with~$\bOne_z(\ordered{x})=1$ into~$\orderededge{s}\ordered{x}\orderededge{t}$ every $(k-1)$ vertices, that is by choosing not necessarily distinct~$z_1,\ldots,z_{k-1}\in N$ with~$\bOne_{z_i}(\ordered{x})=1$ for all~$i\in[k-1]$ and defining~$\ordered{y}=y_1\ldots y_{(k-1)^2}$ as the sequence with $y_{k(i-1)+1}=z_i$ for all $i\in[k-1]$ that has $\ordered{x}$ as a subsequence. For every~$\ordered{x}\in V^{(k-1)(k-2)}$, the number of~$\ell_k$-walks from~$\orderededge{s}$ to ~$\orderededge{t}$ in~$H$ given by such insertion constructions is $(\sum_{z\in N} \bOne_z(\ordered{x}))^{k-1}$.
	For all~$z\in V$, the link~$L_z$ of~$z$ is~$\alpha$-intersecting. Thus the number of~$(k-1)$-walks in~$L_z$ starting at~$(s_2,\ldots,s_k)$ is at least~$\alpha^{k-2} n^{k-2}$ and the induction hypothesis implies that for all tuples~$\orderededge{s}'$ such a walks ends at, the number of~$\ell_{k-1}$-walks from~$\orderededge{s}'$ to~$(t_1,\ldots,t_{k-1})$ in~$L_z$ is at least~$\alpha^{a_{k-1}}n^{\ell_{k-1}-k}$. This yields
	\begin{equation*}
		\sum_{\ordered{x}\in V^{(k-1)(k-2)}} \bOne_z(\ordered{x})\geq \alpha^{a_{k-1}+k-2}n^{\ell_{k-1}-2}=\alpha^{a_{k-1}+k-2}n^{(k-1)(k-2)}.
	\end{equation*}
	Since~$x\mapsto x^{k-1}$ is convex, we conclude with Jensen's inequality that the number of $\ell_k$-walks from $\orderededge{s}$ to $\orderededge{t}$ in $H$ is at least
	\begin{align*}
		\sum_{\ordered{x}\in V^{(k-1)(k-2)}} \Bigl(\sum_{z\in N} \bOne_z(\ordered{x})\Bigr)^{k-1}
		&\geq n^{(k-1)(k-2)}\Bigl(n^{-(k-1)(k-2)} \sum_{z\in N} \sum_{\ordered{x}\in V^{(k-1)(k-2)}} \bOne_z(\ordered{x})\Bigr)^{k-1}\\
		&\geq n^{(k-1)(k-2)}(n^{-(k-1)(k-2)} \cdot\alpha n\cdot \alpha^{a_{k-1}+k-2}n^{(k-1)(k-2)})^{k-1}\\
		&= \alpha^{a_k} n^{\ell_k-k-1},
	\end{align*}
 	which completes the proof.
\end{proof}

\section{Rough weight assignment}\label{sec:initial weight}

The key insight for the proof of Theorem~\ref{thm: stronger version} is the application of Markov chain limit theory with the purpose of estimating the number of cycles that contain a fixed edge and obey well-chosen conditions. 
In particular, we exploit that certain Markov chains are rapidly mixing; that is, the speed of convergence to the limit distribution is exponential.
In order to easily relate a Markov chain with the number of walks starting at an edge we restrict our set of admissible walks by what we call transition systems.

Suppose $H$ is a $k$-graph with vertex set $V$. 
For $\unord{x}\in\unordsubs{V}{k-1}$, a \defn{transition graph} of~$H$ at~$\unord{x}$ is a graph with vertex set $\{e\in E(H)\colon \unord{x}\In e\}$.
A \defn{transition system} of $H$ is a family $\cT=(T_{\unord{x}})_{\unord{x}\in\unordsubs{V}{k-1}}$ where~$T_{\unord{x}}$ is a transition graph of~$H$ at~$\unord{x}$.
A walk in~$H$ with vertex sequence $v_1\ldots v_{\ell+k-1}$ is \defn{$\cT$-compatible} if~$\{v_i,\ldots,v_{i+k-1}\}$ and~$\{v_{i+1},\ldots,v_{i+k}\}$ are adjacent in~$T_{\{ v_{i+1},\ldots,v_{i+k-1} \}}$ for all~$i\in[\ell-1]$ and 
if~$C$ is a cycle, then for~$C$ to be~\defn{$\cT$-compatible} we require that~$C$ is given by a~$\cT$-compatible internally self-avoiding closed walk.
We say that $\cT$ is \defn{$r$-regular} if $T_\unord{x}$ is $r$-regular for all $\unord{x}\in \binom{V}{k-1}$ and we say that $\cT$ is \defn{regular} if $\cT$ is $r$-regular for some $r$.
We will often choose $\cT$ randomly; to be more precise,
we say that $\cT$ is a \emph{random $r$-transition system} if 
for $\unord{x}\in\unordsubs{V}{k-1}$, the graph $T_{\unord{x}}$ is an $r$-regular transition graph of~$H$ at~$\unord{x}$ chosen independently and uniformly at random among all $r$-regular subgraphs of the complete graph with vertex set $\{e\in E(H)\colon \unord{x}\In e\}$.
Note that graphs with transition systems already have been studied, often with an algorithmic perspective (see for example~\cite{Sze:03}).

For a given~$(\alpha,\ell)$-connected~$k$-graph~$H$ on~$n$ vertices, the first step in the proof of Theorem~\ref{thm: stronger version} is the choice of a transition system $\cT$ such that every edge of~$H$ is contained in approximately equally many $\cT$-compatible cycles in $H$. Lemmas~\ref{lemma: transition system connecting} and~\ref{lemma: counting walks} imply that this happens with high probability when $\cT$ is a random $r$-transition system.
We prove this as follows.
Lemma~\ref{lemma: transition system connecting} verifies that $(\alpha,\ell)$-connectedness is maintained (with high probability) when moving to a random transition system provided the parameter $\alpha$ is appropriately lowered.
In Lemma~\ref{lemma: mixing}, we prove that certain Markov chains are rapidly mixing and
apply this in Lemma~\ref{lemma: counting walks} to prove the desired result about cycle counts.


We say that a~$k$-graph~$H$ on~$n$ vertices with~$r$-regular transition system~$\cT$ is~\defn{$\cT$-compatibly~$(\alpha,\ell)$-connected} if the number of~$\cT$-compatible~$\ell$-walks from~$\orderededge{s}$ to~$\orderededge{t}$ is at least~$\alpha r^{\ell-1}/\orderededge{e}(H)$ for all~$\orderededge{s},\orderededge{t}\in\orderededge{E}(H)$.

\begin{lemma}\label{lemma: transition system connecting}
	Let~$1/n\ll\alpha,\eps,1/k,1/\ell$.
	Let~$H$ be an~$(\alpha,\ell)$-connected~$k$-graph on~$n$ vertices,~$r$ an even integer with~$\eps n\leq r\leq\delta(H)$ and suppose $\cT$ is a random $r$-transition system of~$H$. Then~$H$ is~$\cT$-compatibly~$(\alpha/2,\ell)$-connected with probability at least~$1-\exp(-\sqrt{n})$.
\end{lemma}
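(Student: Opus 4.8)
The plan is to fix ordered edges $\orderededge{s},\orderededge{t}\in\orderededge{E}(H)$ and bound from below the number of $\cT$-compatible $\ell$-walks from $\orderededge{s}$ to $\orderededge{t}$; a union bound over the at most $\orderededge{e}(H)^2\le n^{2k}$ such pairs then gives the lemma. Call an $\ell$-walk from $\orderededge{s}$ to $\orderededge{t}$ with vertex sequence $v_1\ldots v_{\ell+k-1}$ \emph{generic} if $v_i\neq v_{i+k}$ for all $i\in[\ell-1]$ and the $(k-1)$-sets $\unord{w}_i:=\{v_{i+1},\ldots,v_{i+k-1}\}$ (for $i\in[\ell-1]$) are pairwise distinct, and let $X=X(\orderededge{s},\orderededge{t})$ be the number of generic $\cT$-compatible $\ell$-walks from $\orderededge{s}$ to $\orderededge{t}$. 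Since a generic $\cT$-compatible walk is in particular $\cT$-compatible, it suffices to show that $X\ge(\alpha/2)r^{\ell-1}/\orderededge{e}(H)$ with probability at least $1-\exp(-\Omega_{\alpha,\eps,k,\ell}(n))$.

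\emph{First moment.} A uniformly random $r$-regular graph on $m$ vertices contains any fixed (non-loop) edge with probability exactly $\tfrac{r}{m-1}$; a generic walk meets $\ell-1$ pairwise distinct transition graphs and in the $i$-th one it needs one fixed edge, namely between the two distinct vertices $\{v_i\}\cup\unord{w}_i$ and $\unord{w}_i\cup\{v_{i+k}\}$, so its probability of being $\cT$-compatible equals $\prod_{i\in[\ell-1]}\tfrac{r}{d(\unord{w}_i)-1}\ge(r/n)^{\ell-1}$. By $(\alpha,\ell)$-connectedness there are at least $\alpha n^{\ell-1}/\orderededge{e}(H)$ walks from $\orderededge{s}$ to $\orderededge{t}$, and since $\orderededge{e}(H)\le n^k$ only an $O_{k,\ell}(n^{\ell-k-2})=o(\alpha n^{\ell-1}/\orderededge{e}(H))$ fraction of them fail to be generic; hence the number $W_0$ of generic walks satisfies $W_0\ge(1-o(1))\alpha n^{\ell-1}/\orderededge{e}(H)$, and therefore $\ex[X]\ge(1-o(1))\alpha r^{\ell-1}/\orderededge{e}(H)$.

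\emph{Concentration.} I would reveal the transition system in two stages, because the two transition graphs $T_{\unord{w}_1}$ and $T_{\unord{w}_{\ell-1}}$ at the windows $\{v_2,\ldots,v_k\}$ and $\{v_\ell,\ldots,v_{\ell+k-2}\}$ that touch $\orderededge{s}$ or $\orderededge{t}$ can each flip the compatibility of $\Theta(n^{\ell-k-1})$ walks, so a single-shot bounded-differences argument is too weak. First expose $T_{\unord{w}_1}$ and $T_{\unord{w}_{\ell-1}}$; let $A_1$ and $A_2$ be the (translated) neighbourhoods in them of the vertices $\{v_1,\ldots,v_k\}$ and $\{v_\ell,\ldots,v_{\ell+k-1}\}$, which are independent, uniformly random $r$-element subsets of $N_H(\{v_2,\ldots,v_k\})\setminus\{v_1\}$ and of $N_H(\{v_\ell,\ldots,v_{\ell+k-2}\})\setminus\{v_{\ell+k-1}\}$ respectively. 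Then the number $Y$ of generic walks compatible at positions $1$ and $\ell-1$ equals $\sum_{a\in A_1,\,b\in A_2}g(a,b)$, where $g(a,b)$ counts the generic walks with $v_{k+1}=a$ and $v_{\ell-1}=b$ and satisfies $g(a,b)\le n^{\ell-k-3}$; hence $\ex Y\ge(r/n)^2W_0$, and since changing a single element of $A_1$ or $A_2$ alters $Y$ by at most $rn^{\ell-k-3}$, a standard concentration inequality for functions of uniformly random subsets gives $Y\ge\tfrac{9}{10}(r/n)^2W_0$ with probability at least $1-\exp(-\Omega_{\alpha,\eps}(n))$. On this event, $\ex[X\mid T_{\unord{w}_1},T_{\unord{w}_{\ell-1}}]\ge(r/n)^{\ell-3}Y\ge\tfrac34\alpha r^{\ell-1}/\orderededge{e}(H)$ for $n$ large.

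Conditionally on $T_{\unord{w}_1},T_{\unord{w}_{\ell-1}}$, the quantity $X$ is a function of the remaining, independent transition graphs, and I would finish by a McDiarmid-type inequality. Resampling $T_{\unord{x}}$ for $\unord{x}\notin\{\unord{w}_1,\unord{w}_{\ell-1}\}$ changes the status only of generic walks that have $\unord{x}$ as a window, and among those only of the ones whose relevant edge lies in the symmetric difference of the two copies of $T_{\unord{x}}$; using that all degrees in $T_{\unord{x}}$ equal $r$ — so that for a window meeting $\orderededge{s}$ or $\orderededge{t}$ the corresponding vertex of the relevant pair is pinned and the symmetric difference meets at most $2r$ of its incident edges — one obtains $c_{\unord{x}}=O_{k,\ell}(rn^{\ell-2k-1})$ for the interior windows and a strictly smaller bound for the remaining boundary windows, whence $\sum_{\unord{x}}c_{\unord{x}}^2=O_{k,\ell}(n^{2\ell-2k-3})$, which is a factor $\Omega(n)$ below $(\ex[X\mid T_{\unord{w}_1},T_{\unord{w}_{\ell-1}}])^2=\Omega_{\alpha,\eps,\ell}(n^{2\ell-2k-2})$. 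McDiarmid then yields $\pr[X<\tfrac23\ex[X\mid T_{\unord{w}_1},T_{\unord{w}_{\ell-1}}]\mid T_{\unord{w}_1},T_{\unord{w}_{\ell-1}}]\le\exp(-\Omega_{\alpha,\eps,k,\ell}(n))$, so $X\ge\tfrac12\alpha r^{\ell-1}/\orderededge{e}(H)$ outside an event of probability $\exp(-\Omega_{\alpha,\eps,k,\ell}(n))$, and the union bound over the $\le n^{2k}$ pairs $(\orderededge{s},\orderededge{t})$ completes the proof. The main obstacle is precisely this asymmetry: among the roughly $n^{k-1}$ transition graphs, the two belonging to the windows adjacent to the endpoints $\orderededge{s},\orderededge{t}$ have far too much influence on $X$ for a one-shot concentration argument, and they must be pulled out and handled separately by the conditioning step above; once they are fixed, every remaining transition graph perturbs $X$ by a negligible amount and the rest is routine.
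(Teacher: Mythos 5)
Your argument is a genuinely different route from the paper's. The paper's proof also first passes to (internally self-avoiding) walks, but then partitions a large subset of them into $p$ disjoint ``bundles'' of size $q=n^{3/4}$, chosen so that any two walks in the same bundle intersect only in $s\cup t$; within a bundle the compatibility indicators are shown to stochastically dominate i.i.d.\ Bernoullis of parameter $\approx(r/n)^{\ell-1}$, and Chernoff plus a union bound over bundles and pairs $(\orderededge{s},\orderededge{t})$ finishes. Your proof instead exposes the two high-influence transition graphs $T_{\unord{w}_1},T_{\unord{w}_{\ell-1}}$ first, concentrates the intermediate count $Y$ as a function of two independent uniform $r$-subsets, and then applies McDiarmid to the remaining independent transition graphs. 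Both proofs single out the same bottleneck — the two transition graphs at the windows adjacent to $\orderededge{s}$ and $\orderededge{t}$, which have $\Omega(1)$ relative influence and would ruin a naive one-shot concentration bound — but the paper's bundle trick removes the need for a Lipschitz calculation, whereas your approach is arguably more systematic and avoids the combinatorial construction of the sets $\cW_1,\ldots,\cW_p$. I checked your Lipschitz accounting: with $c_{\unord{x}}=O(rn^{\ell-2k-1})$ over $O(n^{k-1})$ interior windows and $r\le n$, $k\ge 2$, one indeed gets $\sum_{\unord{x}}c_{\unord{x}}^2=O(n^{2\ell-2k-3})$ while $\big(\ex[X\mid T_{\unord{w}_1},T_{\unord{w}_{\ell-1}}]\big)^2=\Omega(n^{2\ell-2k-2})$, so McDiarmid gives $\exp(-\Omega(n))$, comfortably beating the $n^{2k}$ union bound.

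There is, however, one genuine gap. Your notion of ``generic'' requires the $\ell-1$ windows $\unord{w}_1,\ldots,\unord{w}_{\ell-1}$ to be pairwise distinct, and in particular $\unord{w}_1\neq\unord{w}_{\ell-1}$, i.e.\ $\{s_2,\ldots,s_k\}\neq\{t_1,\ldots,t_{k-1}\}$. This is \emph{not} a property that can be ensured by discarding a small fraction of walks: it is a condition on the fixed pair $(\orderededge{s},\orderededge{t})$, and it fails for legitimate pairs (for example $\orderededge{s}=\orderededge{t}$, which the lemma must cover, or $k=3$, $\orderededge{s}=(a,b,c)$, $\orderededge{t}=(b,c,d)$). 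For such pairs there are no generic walks at all, so $X=0$ and your lower bound collapses. The paper's proof survives this case because it only asks that the first and last edges each lie (roughly independently) in one or two random $r$-regular graphs, and if $\unord{w}_1=\unord{w}_{\ell-1}$ the two relevant edges are still distinct edges of the same $r$-regular graph, so the bound $\pr[X_{W,1}=X_{W,\ell-1}=1\mid\cdots]\ge\tfrac56(r/n)^2$ goes through. To repair your argument you would need to relax genericity to allow $\unord{w}_1=\unord{w}_{\ell-1}$ and, in the two-stage exposure, handle $A_1,A_2$ as two (now correlated) neighbourhoods of distinct vertices of the single random $r$-regular graph $T_{\unord{w}_1}$; since $r\ge\eps n$ is large this correlation is mild and a version of your subset-concentration step should still close, but as written the proof does not cover these pairs and therefore does not establish the lemma.
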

\begin{proof}
	We will show that for sufficiently many suitably chosen disjoint sets of~$\ell$-walks in~$H$ of size~$n^{3/4}$, the number of~$\cT$-compatible walks in these sets is not much lower than their expected value with very high probability. A union bound then finishes the argument.
	
	To this end, fix tuples~$\orderededge{s}=(s_1,\ldots,s_k)$ and~$\orderededge{t}=(t_1,\ldots,t_k)$ in~$\orderededge{E}(H)$. Let~$\cW$ denote the set of the internally self-avoiding~$\ell$-walks from~$\orderededge{s}$ to~$\orderededge{t}$ in~$H$. The number of~$\ell$-walks from~$\orderededge{s}$ to~$\orderededge{t}$ in~$H$ that are not internally self-avoiding is at most~$\ell^2 n^{\ell-k-2}\leq \frac{1}{4}\alpha n^{\ell-1}/\orderededge{e}(H)$ and thus~$\abs{\cW}\geq\frac{3}{4}\alpha n^{\ell-1}/\orderededge{e}(H)$ holds. Let~$p:=\frac{2}{3}\alpha n^{\ell-7/4}/\orderededge{e}(H)$ and~$q:=n^{3/4}$. We wish to obtain pairwise disjoint~$q$-sets~$\cW_1,\ldots,\cW_{p}$ of internally self-avoiding~$\ell$-walks from~$\orderededge{s}$ to~$\orderededge{t}$ in~$H$ such that for all~$i\in[p]$ and~$W,W'\in\cW_i$, the internally self-avoiding walks~$W$ and~$W'$ share only the vertices given by their first and last edge, that is we have~$V(W)\cap V(W')= s\cup t$. Observe that~$\abs{\cW}-pq\geq \frac{1}{12}\alpha n^{\ell-1}/\orderededge{e}(H)$. Note that for all subsets~$\cW'\subseteq \cW$ of size at most~$q$, there are at most~$\ell^2 q n^{\ell-k-2}\leq  n^{\ell-9/8}/\orderededge{e}(H)<\frac{1}{12}\alpha n^{\ell-1}/\orderededge{e}(H)$ walks~$W\in\cW$ with~$V(W)\cap V(W')\neq s\cup t$ for at least one walk~$W'\in\cW'$. These two observations imply that there are sets~$\cW_1,\ldots,\cW_{p}$ as described above (we may simply greedily construct them).
	
	A suitable union bound shows that it suffices to obtain 
	\begin{equation}\label{equation: bound for one set}
	\pr\Biggl[\abs{\{ W\in \cW':\text{$W'$ is~$\cT$-compatible}\} }\leq \frac{3}{4}\biggl(\frac{r}{n}\bigg)^{\ell-1}q\Biggr]\leq \exp(-n^{2/3})
	\end{equation}
	for all~$\cW'\in\{\cW_1,\ldots,\cW_p\}$.
	To this end, fix~$\cW'\in\{\cW_1,\ldots,\cW_p\}$. For~$W=\orderededge{e}_1\ldots\orderededge{e}_{\ell}\in \cW'$ and~$i\in[\ell-1]$, let~$X_{W,i}$ be the indicator random variable of the event that~$\orderededge{e}_{i}\orderededge{e}_{i+1}$ is~$\cT$-compatible. From the fact that the walks in~$\cW'$ are internally self-avoiding, the symmetry of the complete graphs whose vertices are the edges of~$H$ that contain~$\{s_2,\ldots,s_k\}$ and~$\{t_2,\ldots,t_k\}$, respectively,~$q\leq 2n^{3/4}$ and~$r\geq \eps n$, we obtain
	\begin{align*}
	\pr\Biggl[X_{W,1}=X_{W,\ell-1}=1\Biggm| (X_{W',i})_{\begin{subarray}{l}
		W'\in\cW'\setminus\{W\},\\i\in[\ell-1]
		\end{subarray}}\Biggr]&=\pr\Biggl[X_{W,1}=X_{W,\ell-1}=1\Biggm| (X_{W',i})_{\begin{subarray}{l}
		W'\in\cW'\setminus\{W\},\\i\in\{1,\ell-1\}
		\end{subarray}}\Biggr]\\
	&\geq \frac{5}{6}\biggl(\frac{r}{n}\biggr)^2
	\end{align*}
	for all~$W\in \cW'$.
	For~$W\in \cW'$, let~$X_W$ be the indicator random variable of the event that~$W$ is~$\cT$-compatible.
	Since the walks in~$\cW'$ are internally self-avoiding and since we have~$V(W_1)\cap V(W_2)\subseteq s\cup t$ for all~$W_1,W_2\in\cW'$, we conclude that
	\begin{equation*}
	\pr[X_W=1\mid (X_{W'})_{W'\in\cW'\setminus\{W\}}]\geq \frac{5}{6}\biggl(\frac{r}{n}\biggr)^{\ell-1}
	\end{equation*}
	holds for all~$W\in \cW'$. This shows that~$\sum_{W\in \cW'} X_W$ stochastically dominates a binomial random variable with parameters~$q$ and~$\frac{4}{5}\bigl(\frac{n}{r}\bigr)^{\ell-1}$ and implies~\eqref{equation: bound for one set} by using Chernoff's inequality (see Lemma~\ref{lemma: Chernoff}).
\end{proof}

\begin{lemma}\label{lemma: mixing}
	Let~$(X_t)_{t\in\bN_0}$ be a Markov chain with state space~$\{s_1,\ldots,s_n\}$, transition matrix~$(p_{i,j})_{i,j\in[n]}$ with~$p_{i,j}\neq 0$ for all~$i,j\in[n]$ and (unique) stationary distribution given by~$(\sigma_i)_{i\in[n]}$ with~$\sigma_i\neq 0$ for all~$i\in[n]$.
	Let~$\alpha:=\min_{i,j,k\in[n]}\frac{p_{i,j}}{\sigma_k}$ and~$\beta:=\max_{i,j,k\in[n]}\frac{p_{i,j}}{\sigma_k}$. Then
	\begin{equation*}
		\pr[X_t=s_i]=(1\pm(1-\alpha/2)^t)\sigma_i
	\end{equation*}
	holds for~$t\geq 2+2\alpha^{-1}\log\beta$.
\end{lemma}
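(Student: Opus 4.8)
The plan is to follow the law $\mu_t$ of $X_t$ relative to $\sigma$ through the density $f_t(i):=\pr[X_t=s_i]/\sigma_i$ and to show that its ``spread'' $\max_i f_t(i)-\min_i f_t(i)$ contracts geometrically by a factor $1-\alpha$ per step. First I would record two consequences of the definitions of $\alpha$ and $\beta$: specialising the relevant extrema to $k=j$ gives $\alpha\sigma_j\le p_{i,j}\le\beta\sigma_j$ for all $i,j\in[n]$; summing the left inequality over $j$ (with $\sum_j\sigma_j=1$) yields $\alpha\le1$, and $\beta\ge1$ follows since some row of $(p_{i,j})$ has an entry $\ge1/n$ while $\sigma_k\le1/n$ for some $k$. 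Throughout I may assume $0<\alpha<1$: if $\alpha=1$ then $p_{i,j}\ge\sigma_j$ together with $\sum_j p_{i,j}=1=\sum_j\sigma_j$ forces $p_{i,j}=\sigma_j$, hence $\mu_t=\sigma$ for all $t\ge1$, and the claim is trivial (here $2+2\alpha^{-1}\log\beta\ge2\ge1$).

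Next, using stationarity in the form $\sigma_j=\sum_i\sigma_ip_{i,j}$, I would rewrite
\[
 f_{t+1}(j)=\frac{1}{\sigma_j}\sum_i\pr[X_t=s_i]\,p_{i,j}=\sum_i w_{i,j}\,f_t(i),\qquad w_{i,j}:=\frac{\sigma_ip_{i,j}}{\sigma_j},
\]
so $f_{t+1}(j)$ is a weighted average of the $f_t(i)$ with weights $w_{\cdot,j}$ summing to $1$. Note $\sum_i\sigma_if_t(i)=\sum_i\pr[X_t=s_i]=1$ for every $t$, so $1$ always lies between $m_t:=\min_if_t(i)$ and $M_t:=\max_if_t(i)$. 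From $p_{i,j}\ge\alpha\sigma_j$ we get $w_{i,j}\ge\alpha\sigma_i$; writing $w_{i,j}=\alpha\sigma_i+(1-\alpha)v_{i,j}$ with $v_{\cdot,j}$ a probability vector and using $\sum_i\sigma_if_t(i)=1$ gives $f_{t+1}(j)=\alpha+(1-\alpha)\sum_iv_{i,j}f_t(i)$, whence $\alpha+(1-\alpha)m_t\le f_{t+1}(j)\le\alpha+(1-\alpha)M_t$ and therefore $M_{t+1}-m_{t+1}\le(1-\alpha)(M_t-m_t)$; iterating, $M_t-m_t\le(1-\alpha)^{t-1}(M_1-m_1)$ for all $t\ge1$. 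Finally, $p_{i,j}\le\beta\sigma_j$ gives $\pr[X_1=s_j]=\sum_i\pr[X_0=s_i]p_{i,j}\le\beta\sigma_j$, i.e.\ $M_1\le\beta$, and with $m_1\ge0$ and $\beta\ge1$ this yields
\[
 \bigl|\pr[X_t=s_i]/\sigma_i-1\bigr|=|f_t(i)-1|\le M_t-m_t\le(1-\alpha)^{t-1}\beta .
\]

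It then remains to check the elementary inequality $(1-\alpha)^{t-1}\beta\le(1-\alpha/2)^t$ for $t\ge2+2\alpha^{-1}\log\beta$. Since $(1-\alpha/2)/(1-\alpha)>1$, the quantity $(1-\alpha/2)^t/\bigl((1-\alpha)^{t-1}\beta\bigr)$ is non-decreasing in $t$, so it suffices to verify the inequality at $t_0:=2+2\alpha^{-1}\log\beta$. Putting $a:=-\log(1-\alpha/2)$ and $b:=-\log(1-\alpha)$, one has $b>2a$ (as $(1-\alpha/2)^2>1-\alpha$) and $b-a=\log\frac{1-\alpha/2}{1-\alpha}>\alpha/2$, and substituting $t=t_0$ into $t\log(1-\alpha/2)-(t-1)\log(1-\alpha)-\log\beta=t(b-a)-b-\log\beta$ gives $(b-2a)+\log\beta\cdot\bigl(\tfrac{2(b-a)}{\alpha}-1\bigr)>0$, which is exactly the desired inequality. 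This gives $\pr[X_t=s_i]=(1\pm(1-\alpha/2)^t)\sigma_i$.

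The one genuinely important point — and the main obstacle — is that the usual Doeblin coupling only delivers the \emph{additive} bound $|\pr[X_t=s_i]-\sigma_i|\le(1-\alpha)^t$, which is useless when $\sigma_i$ is tiny; passing to the density $f_t$ and exploiting \emph{both} $p_{i,j}\ge\alpha\sigma_j$ and $p_{i,j}\le\beta\sigma_j$, together with the invariant $\sum_i\sigma_if_t(i)=1$, is what upgrades this to the required \emph{multiplicative} estimate. Everything else is bookkeeping.
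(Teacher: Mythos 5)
Your proof is correct and takes essentially the same approach as the paper: both work with the relative density $\pr[X_t=s_i]/\sigma_i$ and derive a Doeblin-type geometric contraction from the minorization $p_{i,j}\geq\alpha\sigma_j$, with $\beta$ entering only to bound the initial deviation. The only cosmetic difference is that you track the spread $M_t-m_t$ (giving symmetric bounds directly), whereas the paper tracks $\delta^{\min}_t$ and feeds $\beta$ into the recursion to bound the upper side.
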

\begin{proof}
	For~$t\geq 0$ and~$j\in[n]$, let~$\delta^j_{t}:=\pr[X_t=s_j]/\sigma_j$. We start the proof with finding a lower and an upper bound for~$\delta^j_{t+1}$ in terms of~$\alpha$,~$\beta$ and~$\delta^1_{t},\ldots,\delta^n_{t}$. Then we use these bounds and induction on~$t$ to prove the desired statement.
	
	For~$t\geq 0$, let~$\delta^{\min}_{t}:=\min_{j\in[n]}\delta^j_{t}$. 
	For all~$j\in[n]$, we may write
	\begin{equation*}
		\pr[X_{t+1}=s_j]
		=\sum_{i\in[n]} p_{i,j}(\delta^{\min}_{t}\sigma_i+\pr[X_t=s_i]-\delta^{\min}_{t}\sigma_i)
		=\delta^{\min}_{t}\sigma_j+\sum_{i\in[n]} p_{i,j}(\pr[X_t=s_i]-\delta^{\min}_{t}\sigma_i).
	\end{equation*}
	This yields
	\begin{equation}\label{equation: mixing lower bound induction step}
		\delta^j_{t+1}
		\geq \delta^{\min}_{t}+\sum_{i\in[n]} \alpha(\pr[X_t=s_i]-\delta^{\min}_{t}\sigma_i)	
		=\delta^{\min}_{t}+\alpha-\alpha\delta^{\min}_{t}
		=1-(1-\alpha)(1-\delta^{\min}_{t})
	\end{equation}
	and
	\begin{equation}\label{equation: mixing upper bound induction step}
		\delta^j_{t+1}\leq \delta^{\min}_{t}+\sum_{i\in[n]} \beta(\pr[X_t=s_i]-\delta^{\min}_{t}\sigma_i)	
		=\delta^{\min}_{t}+\beta-\beta\delta^{\min}_{t}=1+(\beta-1)(1-\delta^{\min}_{t}).
	\end{equation}
	We claim that
	\begin{equation}\label{equation: mixing lower bound}
		\delta^i_t\geq 1-(1-\alpha)^t.
	\end{equation}
	holds for all $t\geq 0$ and $i\in[n]$.
	Indeed, it holds for $t=0$.
	Suppose~\eqref{equation: mixing lower bound} holds for some $t_0\geq 0$ (and all $i\in [n]$).
	Then this implies $\delta^{\min}_{t_0}\geq 1-(1-\alpha)^{t_0}$
	and thus~\eqref{equation: mixing lower bound induction step} implies that~\eqref{equation: mixing lower bound} holds for~$t=t_0+1$ and all~$i\in[n]$.
	By induction, \eqref{equation: mixing lower bound} holds for all  $t\geq 0$ and $i\in[n]$.

	Now~\eqref{equation: mixing upper bound induction step},~\eqref{equation: mixing lower bound} and the observation that~$\beta\geq 1$\COMMENT{%
		Let~$\sigma_{\min}:=\min_{i\in[n]} \sigma_i$. $\beta<1$ implies~$p_{i,j}<\sigma_{\min}$ for all~$i,j\in[n]$ and thus~$n=\sum_{i\in[n]} 1=\sum_{i\in[n]}\sum_{j\in[n]} p_{i,j}<\sum_{i\in[n]}\sum_{j\in[n]} \sigma_{\min}=n\sum_{i\in[n]}\sigma_{\min}\leq n\sum_{i\in[n]}\sigma_{i}=n$.
	} imply
	\begin{equation*}
		\delta^{i}_t\leq 1+(\beta-1)(1-\delta^{\min}_{t-1})\leq 1+(\beta-1)(1-\alpha)^{t-1}
	\end{equation*}
	for all~$t\geq 1$ and~$i\in[n]$.
	If~$t\geq 2+2\alpha^{-1}\log\beta\geq 2+\log(1/\beta)/\log(1-\alpha/2)$\COMMENT{%
		$\eul x\leq \eul^x\Rightarrow \log(x)\leq x-1\Rightarrow \log(1-x)\leq -x\Rightarrow \log\bigl(\frac{1}{1-x}\bigr)\geq x$ and thus~$\frac{2\log(\beta)}{\alpha}=\frac{\log(\beta)}{\alpha/2}\geq \frac{\log(\beta)}{\log(1/(1-\alpha/2))}=\frac{\log(1/\beta)}{\log(1-\alpha/2)}$	
	}, then\COMMENT{%
		use $\beta(1-\alpha/2)^{t}\leq (1-\alpha/2)^{t-\log(1/\beta)/\log(1-\alpha/2)}\leq 1$
	}~$(\beta-1)(1-\alpha)^{t-1}\leq\beta(1-\alpha/2)^{2t-2}\leq (1-\alpha/2)^t$.
\end{proof}
For~$\zeta\in(0,1)$, we say that a~$k$-graph~$H$ on~$n$ vertices with~$r$-regular transition system~$\cT$ is~\defn{$\zeta$-exactly~$\cT$-compatibly~$\ell$-connected} if for all~$\orderededge{s},\orderededge{t}\in\orderededge{E}(H)$, the number of~$\cT$-compatible~$\ell$-walks from~$\orderededge{s}$ to~$\orderededge{t}$ in~$H$ is~$(1\pm\zeta) r^{\ell-1}/\orderededge{e}(H)$. 

\begin{lemma}\label{lemma: counting walks}
	Let~$1/n\ll \eps,1/k$. Suppose~$H$ is a~$k$-graph on~$n$ vertices and~$\cT$ is an~$r$-regular transition system of~$H$ with~$r\geq \eps n$ such that~$H$ is~$\cT$-compatibly~$(\alpha,\ell_0)$-connected.
	Then~$H$ is~$(1-\frac{\alpha}{3\ell_0})^{\ell}$-exactly~$\cT$-compatibly~$\ell$-connected for all~$\ell\geq3k\ell_0\log(2/\eps)/\alpha$.
\end{lemma}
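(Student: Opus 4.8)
The plan is to model $\cT$-compatible walks by a simple random walk on an auxiliary digraph and then quote the mixing estimate of Lemma~\ref{lemma: mixing}. Let $J$ be the digraph with vertex set $\orderededge{E}(H)$ in which $\orderededge{e}=(v_1,\dots,v_k)$ has an out-edge to $\orderededge{f}=(v_2,\dots,v_k,w)$ exactly when $\{v_1,\dots,v_k\}$ and $\{v_2,\dots,v_k,w\}$ are adjacent in $T_{\{v_2,\dots,v_k\}}$. Since each $T_{\unord{x}}$ is $r$-regular, every vertex of $J$ has out-degree and in-degree precisely $r$, so $J$ is Eulerian and the simple random walk $(X_t)_{t\geq 0}$ on $J$ has the uniform distribution $\sigma$ on $\orderededge{E}(H)$ as its unique stationary distribution. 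Moreover, writing $N_\ell(\orderededge{s},\orderededge{t})$ for the number of $\cT$-compatible $\ell$-walks from $\orderededge{s}$ to $\orderededge{t}$, we have the identity $N_\ell(\orderededge{s},\orderededge{t})=r^{\ell-1}\pr[X_{\ell-1}=\orderededge{t}\mid X_0=\orderededge{s}]$, so the task is to show that this probability equals $(1\pm(1-\tfrac{\alpha}{3\ell_0})^\ell)\sigma_{\orderededge{t}}$.

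Lemma~\ref{lemma: mixing} needs a chain with everywhere-positive transitions, so I would pass to the chain $(Y_m)_{m\geq 0}$ taking $d:=\ell_0-1$ steps of $X$ at a time, i.e.\ $Y_m=X_{dm}$, with transition probabilities $p_{\orderededge{s},\orderededge{t}}=N_{\ell_0}(\orderededge{s},\orderededge{t})/r^{d}$ and stationary distribution $\sigma$ again. Here $\cT$-compatible $(\alpha,\ell_0)$-connectedness gives $N_{\ell_0}(\orderededge{s},\orderededge{t})\geq \alpha r^{d}/\orderededge{e}(H)$, so $p_{\orderededge{s},\orderededge{t}}/\sigma_{\orderededge{u}}\geq\alpha$ for all $\orderededge{s},\orderededge{t},\orderededge{u}$; in particular all entries are positive and the stationary distribution is unique. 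The crucial step is the matching upper bound: a $\cT$-compatible $\ell_0$-walk from $\orderededge{s}$ to $\orderededge{t}$ is determined by its vertex sequence $v_1\dots v_{\ell_0+k-1}$, of which the first $k$ entries are prescribed by $\orderededge{s}$ and the last $k$ by $\orderededge{t}$, while each of the remaining $\ell_0-1-k$ vertices $v_{k+1},\dots,v_{\ell_0-1}$ has at most $r$ admissible values given the earlier ones; hence $N_{\ell_0}(\orderededge{s},\orderededge{t})\leq r^{\ell_0-1-k}$ (this uses $\ell_0\geq k+1$, which must hold here since otherwise $\cT$-compatible $(\alpha,\ell_0)$-connectedness fails for large $n$). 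Consequently $p_{\orderededge{s},\orderededge{t}}/\sigma_{\orderededge{u}}\leq \orderededge{e}(H)/r^{k}\leq (n/r)^{k}\leq \eps^{-k}$, using $\orderededge{e}(H)\leq n^k$ and $r\geq\eps n$ — and this bound is independent of $n$, which is exactly what keeps the mixing time bounded in terms of $\eps,k,\alpha,\ell_0$ only.

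Applying Lemma~\ref{lemma: mixing} to $Y$ with its parameters $\alpha_{\mathrm{mix}}\in[\alpha,1]$ and $\beta_{\mathrm{mix}}\in[1,\eps^{-k}]$ yields $\pr[Y_m=\orderededge{t}\mid Y_0=\orderededge{s}]=(1\pm(1-\alpha/2)^{m})\sigma_{\orderededge{t}}$ once $m\geq 2+2k\alpha^{-1}\log(2/\eps)$ (using $\log(1/\eps)\le\log(2/\eps)$ and monotonicity in the two parameters). To convert this for an arbitrary $\ell$ — not only for $\ell\equiv 1$ modulo $d$ — write $\ell-1=md+s$ with $0\leq s<d$, run $Y$ for $m$ steps and then $X$ for $s$ further steps; since $\sigma$ is stationary we have $\sum_{\orderededge{u}}P^{s}(\orderededge{u},\orderededge{t})=1$, so the relative error is unaffected by the last $s$ steps, giving $\pr[X_{\ell-1}=\orderededge{t}\mid X_0=\orderededge{s}]=(1\pm(1-\alpha/2)^{m})\sigma_{\orderededge{t}}$ and hence $N_\ell(\orderededge{s},\orderededge{t})=(1\pm(1-\alpha/2)^{m})r^{\ell-1}/\orderededge{e}(H)$.

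Finally I would check the arithmetic. For $\ell\geq 3k\ell_0\log(2/\eps)/\alpha$ one has $m=\lfloor(\ell-1)/(\ell_0-1)\rfloor\geq (\ell-1)/(\ell_0-1)-1$, and this is both at least $2+2k\alpha^{-1}\log(2/\eps)$ (so the mixing estimate applies) and large enough that $m\log\tfrac{1}{1-\alpha/2}\geq \ell\log\tfrac{1}{1-\alpha/(3\ell_0)}$, i.e.\ $(1-\alpha/2)^{m}\leq(1-\tfrac{\alpha}{3\ell_0})^{\ell}$; both reduce to routine estimates via $\log\tfrac{1}{1-x}\in[x,\tfrac{x}{1-x}]$ together with $\ell_0\geq k+1$. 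This shows that $H$ is $(1-\tfrac{\alpha}{3\ell_0})^{\ell}$-exactly $\cT$-compatibly $\ell$-connected, as required. The only genuinely delicate point is the second paragraph: one must recognise that with the normalisation $r^{\ell-1}/\orderededge{e}(H)$ the combinatorial bound $N_{\ell_0}(\orderededge{s},\orderededge{t})\leq r^{\ell_0-1-k}$ forces $\beta_{\mathrm{mix}}$ to be bounded purely in terms of $\eps$ and $k$; everything else is bookkeeping.
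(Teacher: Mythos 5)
Your proof is correct and follows essentially the same route as the paper: pass to the $\cT$-compatible simple random walk on $\orderededge{E}(H)$, subsample every $\ell_0-1$ steps so that $\cT$-compatible $(\alpha,\ell_0)$-connectedness gives the lower bound on transitions, pair this with the combinatorial bound $N_{\ell_0}(\orderededge{s},\orderededge{t})\leq r^{\ell_0-k-1}$ (which keeps $\beta_{\mathrm{mix}}\leq\eps^{-k}$, independent of $n$), invoke Lemma~\ref{lemma: mixing}, and mop up the $(\ell-1)\bmod(\ell_0-1)$ remainder by one extra block of steps. One tiny arithmetic slip: the claim that $m\geq 2+2k\alpha^{-1}\log(2/\eps)$ can fail for $\eps,\alpha$ near $1$ and $k=2$; you should instead verify the actual mixing requirement $m\geq 2+2\alpha^{-1}\log\beta_{\mathrm{mix}}\leq 2+2k\alpha^{-1}\log(1/\eps)$, which does follow from $\ell\geq 3k\ell_0\log(2/\eps)/\alpha$, $\ell_0\geq k+1\geq 3$, $\alpha\leq 1$ and $k\geq 2$.
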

\begin{proof}
	This is essentially a special case of Lemma~\ref{lemma: mixing}.
	Fix~$\ell$ as above and let~$\orderededge{s},\orderededge{t}\in\orderededge{E}(H)$ and~$w$ be the number of~$\cT$-compatible~$\ell$-walks from $\orderededge{s}$ to $\orderededge{t}$ in $H$.
	First, we argue why we essentially may assume that $\ell-1$ is a multiple of $\ell_0-1$.
	Let~$\ell'$ be the remainder in the division of~$\ell-1$ by~$\ell_0-1$. 
	The number of~$\cT$-compatible~$({\ell'}+1)$-walks starting at~$\orderededge{s}$ is~$r^{\ell'}$. 
	Thus it suffices to show that for every~$\orderededge{s}'$ that is the last element of such a walk, 
	or more generally every~$\orderededge{s}'\in\orderededge{E}(H)$,
	the number of~$(\ell-\ell')$-walks from~$\orderededge{s}'$ to~$\orderededge{t}$ is~$(1\pm (1-\frac{\alpha}{2\ell_0})^{\ell-\ell'}){r^{\ell-\ell'-1}}/{\orderededge{e}(H)}$ (note that $(1-\frac{\alpha}{2\ell_0})^{\ell-\ell'}\leq (1-\frac{\alpha}{3\ell_0})^\ell$\COMMENT{%
		$(1-\frac{\alpha}{3\ell_0})^{\ell-\ell'+\ell'}\geq (1-\frac{\alpha}{3\ell_0})^{3(\ell-\ell')/2}\geq (1-\frac{\alpha}{2\ell_0})^{\ell-\ell'}$
	}). 
	Therefore, from now on we assume~$(\ell_0-1)\mid (\ell-1)$ and show that~$w=(1\pm (1-\frac{\alpha}{2\ell_0})^{\ell})r^{\ell-1}/\orderededge{e}(H)$ holds.
	
	Consider a~$\cT$-compatible simple random walk~$(\orderededge{E}_t)_{t\in\bN_0}$ on~$\orderededge{E}(H)$ starting at~$\orderededge{s}$, that is we have~$\orderededge{E}_0=\orderededge{s}$ and~$\orderededge{E}_t$ with~$t\geq 1$ is iteratively chosen uniformly at random among all elements $\orderededge{e}\in\orderededge{E}(H)$ for which~$\orderededge{E}_0\ldots \orderededge{E}_{t-1}\orderededge{e}$ is a~$\cT$-compatible walk in~$H$.
	For all~$\orderededge{e},\orderededge{f}\in\orderededge{E}(H)$, the number of~$\cT$-compatible~$\ell_0$-walks from~$\orderededge{e}$ to~$\orderededge{f}$ is at least~$\alpha r^{\ell_0-1}/\orderededge{e}(H)$ and since the number of~$\cT$-compatible~$\ell_0$-walks starting at~$\orderededge{e}$ is~$r^{\ell_0-1}$, we conclude that for all~$t\geq 0$, we have
	\begin{equation*}
		\pr[\orderededge{E}_{t+\ell_0-1}=\orderededge{f}\mid \orderededge{E}_{t}=\orderededge{e}]\geq \frac{\alpha r^{\ell_0-1}}{\orderededge{e}(H)r^{\ell_0-1}}\geq \frac{\alpha}{\orderededge{e}(H)}
	\end{equation*}
	and similarly, since the number of~$\cT$-compatible~$\ell_0$-walks from~$\orderededge{e}$ to~$\orderededge{f}$ in~$H$ is at most~$r^{\ell_0-k-1}$, we obtain
	\begin{equation*}
		\pr[\orderededge{E}_{t+\ell_0-1}=\orderededge{f}\mid \orderededge{E}_{t}=\orderededge{e}]\leq \frac{r^{\ell_0-k-1}}{r^{\ell_0-1}}\leq \frac{1}{\eps^k n^k}\leq \frac{1}{\eps^k \orderededge{e}(H)}.
	\end{equation*}
	Since~$(\orderededge{E}_t)_{t\in\bN_0}$ is a Markov chain,~$(\orderededge{E}_{(\ell_0-1)t})_{t\in\bN_0}$ is also a Markov chain. Moreover, they have the same stationary distribution and since~$\cT$ is regular, this stationary distribution is given by~$(1/\orderededge{e}(H))_{i\in[\orderededge{e}(H)]}$.
	Note that the given lower bound for~$\ell$ implies\COMMENT{%
		use~$\frac{\ell-1}{\ell_0-1}\geq \frac{3k\ell_0\log(2/\eps)-1}{\ell_0\alpha}\geq \frac{3k\ell_0\log(2/\eps)-\ell_0/3}{\ell_0\alpha}\geq \frac{2k\ell_0\log(2/\eps)+(k-1/3)\ell_0/\log(2)}{\ell_0\alpha}\geq \frac{2k\ell_0\log(2/\eps)+(5/3)\ell_0/\log(2)}{\ell_0\alpha}\geq \frac{2\log(\eps^{-k})}{\alpha}+5\log(2)$
	}~$\frac{\ell-1}{\ell_0-1}\geq \frac{2\log(\eps^{-k})}{\alpha}+2$.
	Thus applying Lemma~\ref{lemma: mixing} to~$(\orderededge{E}_{(\ell_0-1)t})_{t\in\bN_0}$ with~$\alpha$ and~$\eps^{-k}$ playing the roles of~$\alpha$ and~$\beta$ yields
	\begin{equation*}
		w=\pr[\orderededge{E}_{\ell-1}=\orderededge{t}]r^{\ell-1}=\Bigl(1\pm(1-\alpha/2)^{\frac{\ell-1}{\ell_0-1}}\Bigr)\frac{r^{\ell-1}}{\orderededge{e}(H)}
	\end{equation*}
	which implies the desired bounds because\COMMENT{%
		use~$\frac{\ell-1}{\ell_0-1}=\frac{\ell\ell_0-\ell_0}{\ell_0^2-\ell_0}\geq \frac{\ell\ell_0-\ell}{\ell_0^2-\ell_0}=\frac{\ell(\ell_0-1)}{\ell_0(\ell_0-1)}=\frac{\ell}{\ell_0}$
	}
		$(1-\frac{\alpha}{2})^{\frac{\ell-1}{\ell_0-1}}
		\leq (1-\frac{\alpha}{2})^{\frac{\ell}{\ell_0}}
		\leq(1-\frac{\alpha}{2\ell_0})^{\ell}.$
\end{proof}

\section{Adjustments to the initial weight distribution}\label{sec:transporter}

Suppose $H$ is an~$(\alpha,\ell_0)$-connected~$k$-graph on~$n$ vertices. Lemmas~\ref{lemma: transition system connecting} and~\ref{lemma: counting walks} ensure the existence of a transition system $\cT$ of~$H$ such that every edge of $H$ is contained in approximately the same number of $\cT$-compatible~$\ell$-cycles in $H$ whenever~$\ell$ is large in terms of~$\ell_0$. 
Placing the same weight on every such cycle yields an initial weight distribution $\omega_0$ on the edges of~$H$ such that the weight $\sum_{C\in \cC_{\ell}(H)\colon e\in E(C)} \omega_0(e)$ on the edge~$e$ is roughly the same for all $e\in E(H)$, say after suitable normalisation, approximately~$1$. 
This section comprises the description of the refinement of $\omega_0$ by incremental modifications until we obtain a fractional~$C^k_{\ell}$-decomposition.
These modifications mimic sending small fractions of weight from one edge to another and are realized by manipulating the weights assigned to cycles that form certain substructures of $H$, which we call transporters.

For a~$k$-tuple~$\orderededge{e}=(e_1,\ldots,e_k)$ and an integer~$i\geq 0$, 
we define \defn{$\rotatededge{e}{i}:=(e_{i+1},\ldots,e_{i+k})$} with indices considered modulo $k$.
Let $\ell\geq 4$.
For $\orderededge{s},\orderededge{t}\in \orderededge{E}(H)$ with $s\cap t=\emptyset$, a closed $(k\ell+1) $-walk $\orderededge{e}_0\ldots\orderededge{e}_{k\ell}$ in $H$ is an \defn{$(\ell-1,\orderededge{s},\orderededge{t})$-transporter} in $H$ if
\begin{enumerate}[label=(\roman*)]
	\item\label{item: visits rotated} $\orderededge{e}_{i\ell}=\rotatededge{s}{i}$ and $\orderededge{e}_{i\ell+\floor{\ell/2}}=\rotatededge{t}{i}$ hold for all $i\in[k-1]_0$,
	\item the walk $\orderededge{e}_{i\ell}\ldots \orderededge{e}_{(i+1)\ell -k}$ is self-avoiding for all $i\in[k-1]_0$ and 
	\item $V(\orderededge{e}_{i\ell}\ldots \orderededge{e}_{(i+1)\ell -k})\cap V(\orderededge{e}_{j\ell}\ldots \orderededge{e}_{(j+1)\ell -k})\subseteq s\cup t$ holds for all distinct~$i,j\in[k-1]_0$.
\end{enumerate}
We say that a closed walk~$T$ in~$H$ is an~\defn{$(\ell-1)$-transporter} in~$H$ if there are~$\orderededge{s},\orderededge{t}\in\orderededge{E}(H)$ such that~$T$ is an $(\ell-1,\orderededge{s},\orderededge{t})$-transporter in~$H$. 
For these and the following definitions it is notationally more convenient to consider~$(\ell-1)$-transporters, however in the proof of Lemma~\ref{lemma: adjustments} we will use~$\ell$-transporters instead.

Let~$T=\orderededge{e}_0 \ldots \orderededge{e}_{k\ell}$ be an $(\ell-1,(s_1,\ldots,s_k),(t_1,\ldots,t_k))$-transporter in $H$. Observe that property~\ref{item: visits rotated} implies that for all~$i\in[k-1]_0$, there are~$u,v,u',v'\in V(H)$ such that 
\begin{equation*}
\begin{aligned}
\orderededge{e}_{i\ell+\floor{\ell/2}-1}&=(u,t_{i+1},\ldots,t_{i+k-1}),&&&\orderededge{e}_{(i-1)\ell+\floor{\ell/2}+1}&=(t_{i+1},\ldots,t_{i+k-1},v),\\
\orderededge{e}_{(i+1)\ell-1}&=(u',s_{i+2},\ldots,s_{i+k})&\text{ and }&&\orderededge{e}_{i\ell+1}&=(s_{i+2},\ldots,s_{i+k},v')
\end{aligned}
\end{equation*}
with indices of~$\orderededge{e}$ considered modulo~$k\ell$ and indices of~$s$ and~$t$ considered modulo~$k$.
This allows the following definition of two crucial sets of cycles associated with~$T$ as illustrated in Figure~\ref{figure:trasporter}:
The \defn{sending cycles} of $T$ are the~$(\ell-1)$-cycles given by the self-avoiding closed walks
\begin{equation*}
\orderededge{e}_{i\ell} \ldots \orderededge{e}_{i\ell+\floor{\ell/2}-1}\orderededge{e}_{(i-1)\ell+\floor{\ell/2}+1} \ldots \orderededge{e}_{i\ell}
\end{equation*}
and the \defn{receiving cycles} of $T$ are the~$(\ell-1)$-cycles given by the self-avoiding closed walks
\begin{equation*}
\orderededge{e}_{i\ell+\floor{\ell/2}} \ldots \orderededge{e}_{(i+1)\ell-1}\orderededge{e}_{i\ell+1} \ldots \orderededge{e}_{i\ell+\floor{\ell/2}}
\end{equation*}
with indices considered modulo $k\ell$ and $i\in[k-1]_0$. Observe that every edge~$e\in E(T)\setminus\{s,t\}$ is an edge of exactly one sending and one receiving cycle while~$s$ is an edge of exactly~$k$ sending and zero receiving cycles and~$t$ an edge of exactly zero sending and~$k$ receiving cycles of~$T$.
\begin{figure}[t]
	\centering
	\begin{tikzpicture}[scale=1, x={(0.95cm, 0cm)}, y={(0cm,0.75cm)}]\footnotesize

	\pgfdeclarelayer{marker}
	\pgfsetlayers{marker,main}
	
	\pgfkeys{%
		/tikz/layer/.code={
			\pgfonlayer{#1}\begingroup
			\aftergroup\endpgfonlayer
			\aftergroup\endgroup
		}
	}

	\tikzset{%
		vertex/.style={fill=black, circle, inner sep=0pt, outer sep=0pt, minimum size=4pt},
		many/.style={decorate, decoration={zigzag, pre length=5pt, post length=5pt, segment length=8pt, amplitude=2pt}},
		arc/.style={-{Stealth[width=5pt, angle=50:5pt]}},
		dashed/.style={dash pattern=on 2.5pt off 2.5pt},
		blue marker/.style={line width=10.5pt, line cap=round, line join=round, color=blue, opacity=0.3},
		red marker/.style={line width=6.5pt, line cap=round, line join=round, color=red, opacity=0.3}
	}
	
	\foreach [count=\c] \i in {1,...,4} {
		\pgfmathsetmacro\x{3*(\c-1)}
		\node[vertex] (u\i) at (\x,3) {};
		\node[vertex] (v\i) at (\x,-3) {};
	}
	\foreach [count=\c] \i in {j,k} {
		\pgfmathsetmacro\x{8+3*(\c-1)}
		\node[vertex] (u\i) at (\x,3) {};
		\node[vertex] (v\i) at (\x,-3) {};
	}
	
	\path (u1.center)--(v1.center) node(u1+1)[pos=0.25, vertex]{} node(v1-1)[pos=0.875, vertex]{};
	\foreach \i in {2,...,4,j} {
		\path (u\i.center)--(v\i.center) node(u\i+1)[pos=0.25, vertex]{} node(v\i-1)[pos=0.75, vertex]{};
	}
	\path (uk.center)--(vk.center) node(uk+1)[pos=0.125, vertex]{} node(vk-1)[pos=0.75, vertex]{};
	
	\foreach \i/\j in {1/2,2/3,3/4,j/k} {
		\path (v\i.center)--(u\j.center) node(v\i+1)[pos=0.25, vertex]{} node(u\j-1)[pos=0.75, vertex]{};
	}	
	\path (vk.center)--(u1.center) node(vk+1)[pos=0.125, vertex]{} node(u1-1)[pos=0.875, vertex]{};
	
	\foreach \i in {1,2,3,k} {
		\draw[arc, many] (u\i+1)--(v\i-1);
		\draw[arc] (u\i)--(u\i+1);	
		\draw[arc] (v\i-1)--(v\i);
	}
	
	\foreach \i/\j in {1/2,2/3,3/4,j/k} {
		\draw[arc, many] (v\i+1)--(u\j-1);
	}
	
	
	\foreach \i in {1,2,3,k} {
		\draw[arc] (v\i)--(v\i+1);
		\draw[arc] (u\i-1)--(u\i);
	}
	
	\foreach \i/\j in {1/2,2/3,3/4,j/k} {
		\draw[arc, dashed] (u\j-1)--(u\i+1);
		\draw[arc, dashed] (v\j-1)--(v\i+1);
	}
	
	\draw[draw=none, fill=white] (7.75, 3.9) rectangle (9.25, -3.9);
	
	\foreach \i/\label in {
		1/$\orderededge{s}=\orderededge{e}_0$,
		2/$\rotatededge{s}{1}=\orderededge{e}_{\ell}$,
		3/$\rotatededge{s}{2}=\orderededge{e}_{2\ell}$,
		k/$\rotatededge{s}{k-1}=\orderededge{e}_{(k-1)\ell}$} {
		
		\node[above] at (u\i.north) {\label};
	}

	\foreach \i/\label in {
		1/$\orderededge{t}=\orderededge{e}_{\floor*{\ell/2}}$,
		2/$\rotatededge{t}{1}=\orderededge{e}_{\ell+\floor*{\ell/2}}$,
		3/$\rotatededge{t}{2}=\orderededge{e}_{2\ell+\floor*{\ell/2}}$,
		k/$\rotatededge{t}{k-1}=\orderededge{e}_{(k-1)\ell+\floor*{\ell/2}}$} {
		
		\node[below, yshift=-2pt] at (v\i.south) {\label};
	}

	\foreach \i/\j in {1/2,2/3,3/4,j/k,k/1} {
		\draw[red marker, layer=marker] (u\j.center)--(v\j-1.center)--(v\i+1.center)--cycle;
	}

	\foreach \i/\j in {1/2,2/3,3/4,j/k,k/1} {
		\draw[blue marker, layer=marker] (v\i.center)--(u\j-1.center)--(u\i+1.center)--cycle;
	}
	
	\begin{scope}
		\clip (7.75, 3.9) rectangle (9.25, -3.9);
		
		\draw[red marker, layer=main] (u1.center)--(v1-1.center)--(vk+1.center)--cycle;
		
		\draw[blue marker, layer=main] (vk.center)--(u1-1.center)--(uk+1.center)--cycle;
	\end{scope}
	
	\draw[arc, many] (vk+1)--(u1-1);
	\draw[arc, dashed] (u1-1)--(uk+1);
	\draw[arc, dashed] (v1-1)--(vk+1);
	
	\draw[-] (7.75, 3.9) -- (7.75, -3.9);
	\draw[-] (9.25, 3.9) -- (9.25, -3.9);
	\node[rotate=0] at (8.5, 0) {\makebox[1em][c]{.\hfil.\hfil.}};
	
	\end{tikzpicture}
	\caption{For $\ell\geq 4$, a $k$-graph $H$ and $\protect\orderededge{s},\protect\orderededge{t}\in\protect\orderededge{E}(H)$ with $s\cap t=\emptyset$:
	 $(\ell-1,\protect\orderededge{s},\protect\orderededge{t})$-transporter $T=\protect\orderededge{e}_0\ldots\protect\orderededge{e}_{k\ell}$ in $H$ visualized as a directed cycle in the directed graph $\protect D$ with vertex set $\protect\orderededge{E}(H)$ whose edges are the pairs $(\protect\orderededge{e},\protect\orderededge{f})\in\protect\orderededge{E}(H)^2$ for which $\protect\orderededge{e}\protect\orderededge{f}$ is a walk in $H$.
	Vertical zigzag arrows represent directed $(\floor{\ell/2}-2)$-paths in $D$ and diagonal zigzag arrows 
	represent directed $(\ceil{\ell/2}-2)$-paths in $D$.
	Dashed arrows indicate the shortcuts taken for the sending cycles (in red) and receiving cycles (in blue).
	}
	\label{figure:trasporter}
\end{figure}

Let $\omega\colon \cC_{\ell-1}(H)\rightarrow\bR$ and $w\geq 0$. We say the \defn{function obtained from $\omega$ by using $T$ with weight $w$} is the function $\omega'\colon \cC_{\ell-1}(H)\rightarrow\bR$ with
\begin{equation*}
\omega'(C)= \begin{cases}
\omega(C)-\frac{w}{k}&\text{if $ C $ is a sending cycle of $ T $;}\\
\omega(C)+\frac{w}{k}&\text{if $ C $ is a receiving cycle of $ T $;}\\
\omega(C)&\text{otherwise}
\end{cases}
\end{equation*}
for all~$C\in\cC_{\ell-1}(H)$. 
Note that as a consequence of the observation above, using~$T$ with weight~$w$ shifts weight~$w$ from~$s$ to~$t$ while the weight on all other edges of~$H$ remains unchanged.

In the following we use transporters to shift weight between edges. In order to obtain in the end a fractional decomposition, we have to ensure that all cycle weights remain non-negative. Thus our use of a transporter for weight transportation is constrained by the weights on its sending cycles.

The goal of the refinement of~$\omega_0$ is a redistribution of the 
deviations of the weights on the edges from (their target value)~$1$.
Consider these deviations as vertex weights given by~$\xi\colon V\rightarrow\bR$ in an auxiliary directed graph~$D$ with vertex set~$V$ in which possible weight transportations through the use of transporters are represented by arcs. 
We adjust our weights as follows and as detailed in the proof of Lemma~\ref{lemma: transport plan}.
For all distinct~$x,y\in V$, send weight~${\xi(x)}/{\abs{V}}$ from~$x$ to~$y$ in~$D$ spread equally among directed paths from~$x$ to~$y$ that have length~$\ell$ (we will apply the statement with $\ell=2$).
\begin{lemma}\label{lemma: transport plan}
	Suppose~$\ell\geq 1$ is an integer, $\alpha>0$ and~$D$ is a directed graph on~$n$ vertices with vertex set~$V$ and arc set~$A$ with~$(v,u)\in A$ for all~$(u,v)\in A$. Suppose that for all distinct~$s,t\in V$, there are at least~$\alpha n^{\ell-1}$ directed~$\ell$-paths from~$s$ to~$t$ in~$D$. Let~$\beta> 0$ and~$\xi\colon V\rightarrow [-\beta,\beta]$ with~$\sum_{v\in V} \xi(v)=0$. Then there is a function~$\eta\colon A\rightarrow \bigl[0,\frac{2\beta\ell}{\alpha n}\bigr]$ such that
	\begin{equation*}
		\xi(v)+\sum_{u:(u,v)\in A} \eta(u,v)-\sum_{u:(v,u)\in A} \eta(v,u)=0
	\end{equation*}
	holds for all~$v\in V$.
\end{lemma}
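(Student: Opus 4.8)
The plan is to construct the transportation function $\eta$ explicitly, realizing the informal recipe stated just before the lemma: for every ordered pair of distinct vertices route $\xi(x)/n$ units of weight from $x$ to $y$, spread uniformly over the directed $\ell$-paths joining them. Concretely, for distinct $x,y\in V$ let $\mathcal{P}_{x,y}$ be the set of directed $\ell$-paths from $x$ to $y$ in $D$ and $p_{x,y}:=\abs{\mathcal{P}_{x,y}}\geq\alpha n^{\ell-1}>0$. First I would handle the sign: if $\xi(x)\geq 0$, give each $P\in\mathcal{P}_{x,y}$ the weight $\xi(x)/(n\,p_{x,y})$; if $\xi(x)<0$, instead route $-\xi(x)/n>0$ units from $y$ to $x$ along $\mathcal{P}_{y,x}$ (which is nonempty since $D$ is symmetric), giving each such path weight $-\xi(x)/(n\,p_{y,x})$. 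Then define $\eta(e)$, for every arc $e\in A$, as the sum over all routed paths $P$ and all ordered pairs of (weight of $P$) times (indicator that $e$ is an arc of $P$); arcs used by no routed path get $\eta(e)=0$. Since every weight above is nonnegative, $\eta\geq 0$ automatically.

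Next I would verify flow conservation. A single routed path $P=v_0v_1\ldots v_\ell$ of weight $w$ adds $w$ to $\eta$ on each of its $\ell$ arcs, hence contributes $+w$ to the out-balance at $v_0$, $-w$ at $v_\ell$, and $0$ at every internal vertex, because a path uses exactly one arc into and one arc out of each internal vertex. For a fixed pair $(x,y)$ the total weight routed is $\xi(x)/n$ in both sign cases, and it always flows from $x$ towards $y$; so the contribution of this pair to the net outflow is $\xi(x)/n$ at $x$ and $-\xi(x)/n$ at $y$. Summing over all pairs, the net outflow at $v$ is $\sum_{y\neq v}\xi(v)/n-\sum_{x\neq v}\xi(x)/n=(n-1)\xi(v)/n-(-\xi(v)/n)=\xi(v)$, using $\sum_{u\in V}\xi(u)=0$. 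This is exactly the identity demanded by the lemma (net outflow equals $\xi(v)$, i.e.\ $\xi(v)+\text{inflow}-\text{outflow}=0$).

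Finally I would bound $\eta(e)$ from above. Each routed path has weight at most $\beta/(n\cdot\alpha n^{\ell-1})=\beta/(\alpha n^{\ell})$, using $\abs{\xi(x)}\leq\beta$ and $p_{\cdot,\cdot}\geq\alpha n^{\ell-1}$. Therefore $\eta(e)$ is at most $\beta/(\alpha n^{\ell})$ times the total number of (routed-path, occurrence-of-$e$) incidences; splitting according to the sign of $\xi(x)$, this total is at most twice the number $N(e)$ of directed $\ell$-paths of $D$ that use $e$ (each directed $\ell$-path has a unique ordered endpoint pair, so it is counted at most once per direction). To bound $N(e)$: an $\ell$-path using $e$ is obtained by choosing the position $i\in[\ell]$ of $e$ among its $\ell$ arcs, which pins down two consecutive vertices, leaving at most $n^{\ell-1}$ choices for the remaining vertices of the path; hence $N(e)\leq\ell n^{\ell-1}$. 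Combining, $\eta(e)\leq\frac{\beta}{\alpha n^{\ell}}\cdot 2\ell n^{\ell-1}=\frac{2\beta\ell}{\alpha n}$, which is the claimed range.

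I do not expect a genuine obstacle: this is a direct, explicit flow construction rather than an existence argument. The only points requiring care are the bookkeeping in the sign split (making "route $\xi(x)/n$ from $x$ to $y$" consistent for negative $\xi(x)$ — this is precisely where the harmless factor $2$ in the bound originates, so trying to avoid it would be more trouble than it is worth) and the path-counting estimate $N(e)\leq\ell n^{\ell-1}$, which I would state by choosing the position of $e$ and bounding the number of walks with the remaining vertices free.
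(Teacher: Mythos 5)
Your proof is correct and takes essentially the same approach as the paper: route $\xi(x)/n$ from $x$ to every other vertex along directed $\ell$-paths, verify conservation by summing endpoint contributions using $\sum_v\xi(v)=0$, and bound $\eta(e)$ by the per-path weight $\beta/(\alpha n^{\ell})$ times the path count $\leq 2\ell n^{\ell-1}$. The only difference is cosmetic: the paper first builds a signed flow $\eta'$ and then sets $\eta(x,y)=\max\{\eta'(x,y)-\eta'(y,x),0\}$, while you reverse the routing direction when $\xi(x)<0$ so that $\eta\geq 0$ holds from the start; both devices produce the same factor $2$ in the bound.
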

\begin{proof}
	We may think of $\xi(v)$ as a (potentially negative) weight located at $v$ and we want to find a (non-negative) flow $\eta$ that distributes these weights such that afterwards the weight located at each vertex is~$0$.
	To this end, for all distinct vertices~$u$ and~$v$, let $\cP_{u,v}$ be a set of $ \ceil{\alpha n^{\ell-1}}$ (directed) $\ell$-paths from~$u$ to~$v$ in $D$.
	As a first step, we define a flow $\eta'\colon A\rightarrow \bR$ which may also send negative weight through arcs.
	Our strategy is as follows; every vertex $v$ sends weight $\frac{\xi(v)}{n}$ to each vertex~$u\neq v$ by sending weight~$\frac{\xi(v)}{n\abs{\cP_{v,u}}}$ along every path in~$\cP_{v,u}$.
	Thus every vertex $v$ sends weight $\bigl(1- \frac{1}{n}\bigr)\xi(v)$ to other vertices.
	How much weight does $v$ receive?
	From each vertex~$u\neq v$, it receives $\xi(u)/n$ and so in total $\sum_{u\in V\setminus \{v\}}\xi(u)/n=-\xi(v)/n$.
	Therefore, this achieves the desired weight distribution.
	
	As each arc of $D$ is contained in at most $\ell n^{\ell-1}$ paths of length $\ell$ in $D$ and as we send weight of absolute value at most $\frac{\beta}{\alpha n^{\ell}}$ along each path, we observe that $|\eta'(x,y)|\leq \frac{\beta \ell}{\alpha n}$ holds for all~$(x,y)\in A$.
	
	Now, we simply transform a negative flow~$\eta'(x,y)$ on an arc~$(x,y)$ into a positive flow in the opposite direction by setting $\eta(x,y)=\max\{\eta'(x,y)-\eta'(y,x),0\}$ for all~$(x,y)\in A$.
	As $|\eta(x,y)|\leq |\eta'(x,y)|+|\eta'(y,x)|\leq \frac{2\beta \ell}{\alpha n}$, this completes the proof.
\end{proof}

\begin{lemma}\label{lemma: adjustments}
	Let~$1/n\ll \eps,\zeta,1/k,1/\ell$. 
	Suppose~$H$ is a~$k$-graph on~$n$ vertices with~$\delta(H)\geq \delta n$ and~$\cT$ is an~$r$-regular transition system of~$H$ with~$r\geq \eps n$ such that~$H$ is~$(1-\zeta)^{\ell'}$-almost~$\cT$-compatibly~$\ell'$-connected for all integers~$\ell'$ with~$\ell/3\leq \ell'\leq 2\ell$. 
	Let~$\mu\in(2^{-\ell},1)$ and suppose~$\ell(1-\zeta/2)^{\ell+1}\leq \frac{\delta^{k+1}\mu}{400k}$ holds. 
	Then there is a fractional~$C_{\ell}^k$-decomposition~$\omega$ of~$H$ with~$\omega(C)=(1\pm \mu) \frac{2e(H)}{r^{\ell}}$ for all~$\cT$-compatible~$C\in\cC_{\ell}(H)$ and~$\omega(C)\leq \mu\frac{2e(H)}{r^{\ell}}$ for all~$C\in\cC_{\ell}(H)$ that are not~$\cT$-compatible.
\end{lemma}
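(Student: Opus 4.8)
The plan is to implement the two phase strategy described in the introduction and at the start of Section~\ref{sec:transporter}. \textbf{Phase 1 (initial weighting).} Set $\lambda:=2e(H)/r^{\ell}$. A $\cT$-compatible $\ell$-cycle is given by a $\cT$-compatible internally self-avoiding closed walk of length $\ell+1$, and $\cT$-compatibility of a walk is invariant under reversal; hence the number of $\cT$-compatible $\ell$-cycles through a fixed edge $e$ equals $\tfrac12$ times the sum, over the $k!$ orderings $\orderededge{e}$ of $e$, of the number of $\cT$-compatible closed $(\ell+1)$-walks from $\orderededge{e}$ to $\orderededge{e}$, up to a correction of order $n^{\ell-k-1}$ for walks that are not internally self-avoiding, which is negligible against $r^{\ell}/e(H)$. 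Applying the hypothesis with $\ell'=\ell+1\in[\ell/3,2\ell]$ and using $\orderededge{e}(H)=k!\,e(H)$, this count equals $(1\pm\gamma)r^{\ell}/(2e(H))$ for every $e$, with $\gamma$ of order $(1-\zeta)^{\ell+1}$. Writing $\cC$ for the set of all $\cT$-compatible $\ell$-cycles, double counting incidences gives $|\cC|=(1\pm\gamma)r^{\ell}/(2\ell)$, so putting weight $w_0:=e(H)/(\ell|\cC|)=(1\pm 2\gamma)\lambda$ on each member of $\cC$ and $0$ on every other $\ell$-cycle yields a weighting $\omega_0$ whose induced edge weights have the form $1+\xi(e)$ with $\max_e|\xi(e)|\le 5\gamma$ and $\sum_{e\in E(H)}\xi(e)=0$.

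\textbf{Phase 2 (flow and transporters).} It remains to cancel the deviations $\xi$. As in the recipe preceding Lemma~\ref{lemma: transport plan}, form the digraph $D$ on vertex set $E(H)$ with an arc from $a$ to $b$ whenever $a\cap b=\emptyset$; for any two edges $s,t$ there are at least $\tfrac12 e(H)$ edges disjoint from $s\cup t$, hence at least $\tfrac12 e(H)$ directed $2$-paths from $s$ to $t$ in $D$, and $D$ has all reverse arcs. Lemma~\ref{lemma: transport plan}, applied with its parameter $\ell$ equal to $2$, $\alpha=1/2$ and $\beta=5\gamma$, then supplies a flow $\eta\colon A(D)\to[0,40\gamma/e(H)]$ with $\xi(a)+\sum_{b}\eta(b,a)-\sum_{b}\eta(a,b)=0$ for all $a$; in particular $\sum_{b}\eta(a,b)=O(\gamma)$ for each $a$ and $\sum_{\mathrm{arcs}}\eta=O(\gamma\,e(H))$. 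Next I would show that for every arc $(a,b)$ there are many \emph{nice} $\ell$-transporters from $a$ to $b$, that is, $(\ell,\orderededge{a},\orderededge{b})$-transporters (for suitable orderings $\orderededge{a},\orderededge{b}$ of $a,b$, as announced just before Lemma~\ref{lemma: adjustments}) all of whose sending and receiving $\ell$-cycles are $\cT$-compatible: such a transporter splits into $2k$ arms of length $\lfloor(\ell+1)/2\rfloor$ or $\lceil(\ell+1)/2\rceil$, each a $\cT$-compatible walk with prescribed first and last edge (rotations of $\orderededge{a},\orderededge{b}$), and $\cT$-compatibility of the sending/receiving cycles is secured by additionally prescribing the second and penultimate edges of the $a$-to-$b$ arms so that the two ``jump'' steps of each such cycle are $\cT$-adjacencies, for which $r$-regularity leaves $\ge r/2$ choices once the neighbouring arm is fixed. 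Since all arm lengths lie in $[\ell/3,2\ell]$, the hypothesis gives $(1\pm o(1))r^{L-1}/\orderededge{e}(H)$ choices for an arm of length $L$, and the self-avoidance requirements (ii),(iii) in the definition of a transporter remove only an $o(1)$-fraction of the product; so the number of nice $\ell$-transporters from $a$ to $b$ is positive and has a clean asymptotic formula.

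\textbf{Phase 2, continued.} Now distribute each $\eta(a,b)$ over a carefully chosen set of nice $\ell$-transporters from $a$ to $b$, and let $\omega$ be obtained from $\omega_0$ by successively ``using'' each chosen transporter with its assigned weight. Since using a transporter moves its weight from $s$ to $t$ and leaves every other edge weight unchanged, the edge weight of each $a$ changes by exactly $-\xi(a)$, so all edge weights become $1$ and $\omega$ is a fractional $C_{\ell}^{k}$-decomposition. Sending cycles stay $\cT$-compatible and start with weight $\approx\lambda$, while receiving cycles that happen not to be $\cT$-compatible start at $0$; so it suffices to bound by $\mu\lambda$ the total weight added to, and removed from, each individual $\ell$-cycle. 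The total weight moved is $O(\gamma\,e(H))$ and $|\cC|\approx r^{\ell}/(2\ell)$, so the \emph{average} change per $\cT$-compatible cycle is $O(\gamma\ell\lambda)$, which lies below $\mu\lambda$ by the hypothesis $\ell(1-\zeta/2)^{\ell+1}\le\delta^{k+1}\mu/(400k)$ (the minimum-degree bound enters through $e(H)\gtrsim\delta n^{k}$ when comparing the relevant edge, cycle and transporter counts).

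\textbf{The main obstacle.} The difficulty is the word ``carefully'': one must pick the transporters realising the flow so that \emph{no} $\ell$-cycle is charged much more than the average above. Using \emph{all} nice transporters from $a$ to $b$ with equal weight fails badly, since the sending cycle of such a transporter is forced to run near $b$, so that scheme piles all of $a$'s outgoing weight onto the tiny family of $\cT$-compatible cycles through $a$ that also meet a $(k-1)$-subset of some edge disjoint from $a$. The fix is to exploit that $\eta$ disperses $a$'s surplus over many targets $b$, and to select the transporters — plausibly at random with a Chernoff-type concentration argument, or greedily via a counting estimate — so that, summed over all targets, the sending cycles charged against $a$ are essentially equidistributed over all $\approx r^{\ell}/(2e(H))$ $\cT$-compatible $\ell$-cycles through $a$ (and likewise for receiving cycles); then the per-cycle change stays within a constant factor of the average. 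Proving this equidistribution — in effect another mixing/counting statement, now at the level of cycles through a fixed edge rather than of walks — is the technical heart of the lemma; Phase~1, the transporter counts, and the flow from Lemma~\ref{lemma: transport plan} are comparatively routine.
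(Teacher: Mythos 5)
Your Phase~1 and the flow setup in Phase~2 match the paper's proof (up to the cosmetic difference that the paper only requires the \emph{sending} cycles of a transporter to be $\cT$-compatible, not the receiving cycles; the final clause ``$\omega(C)\leq\mu\frac{2e(H)}{r^\ell}$ for non-$\cT$-compatible $C$'' exists precisely to accommodate receiving cycles that fail to be $\cT$-compatible). The problem is in what you call the ``main obstacle'': it is not an obstacle, and the equidistribution statement you pose as the ``technical heart'' is neither needed nor proved, so the proposal stops short of a proof.

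Concretely, the paper does exactly what you claim ``fails badly'': for each ordered pair $(\orderededge{s},\orderededge{t})$ it takes an \emph{arbitrary} set $\sT_{\orderededge{s},\orderededge{t}}$ of $m$ nice transporters (where $m$ is the guaranteed lower bound $\tfrac12\delta^k n^k r^{k\ell}/\orderededge{e}(H)^{2k}$), uses each with the equal weight $\eta(s,t)/((k!)^2m)$, and then bounds the total change on a fixed cycle $C$ by the crude worst case: the number $m'$ of transporters over \emph{all} $(\orderededge{s},\orderededge{t})$ whose sending (resp.\ receiving) cycles include $C$ is at most $2k\ell n\cdot\tfrac{3}{2}n^{k-1} r^{k\ell-\ell}/\orderededge{e}(H)^{2k-2}$, since once the position of $s$ in $C$ is fixed ($\leq 2k\ell$ ways) there are only $\leq n$ compatible $\orderededge{t}$ and the remaining arms are counted by the connectivity hypothesis. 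This gives per-cycle change at most $m'\cdot\tfrac{1}{k}\cdot\tfrac{16\xi_{\max}}{\delta k!mn^k}\leq\tfrac{192}{\delta^{k+1}}\ell(1-\zeta/2)^{\ell+1}\cdot\tfrac{2e(H)}{r^\ell}$, i.e.\ only a $O(\delta^{-(k+1)})$ factor above the average you compute --- which the assumption $\ell(1-\zeta/2)^{\ell+1}\leq\delta^{k+1}\mu/(400k)$ is tailored to absorb. Your worry that a sending cycle ``is forced to run near $b$'' so all of $a$'s outgoing weight piles onto a tiny family rests on a misreading: for a \emph{single} target $b$ the constraint is tight, but when $a$'s surplus is spread over all disjoint targets (as $\eta$ automatically does), a given $\cT$-compatible $\ell$-cycle through $a$ lies in the support of only $O(n)$ of them --- the midpoint $(k-1)$-set of $C$ determines $b$ up to one vertex --- and that is exactly what the $m'$ count captures. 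No randomized or greedy selection, and no additional mixing-at-the-level-of-cycles statement, is required; the deterministic worst-case bound already closes the argument.
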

\begin{proof}
	We will start with a scaled uniform distribution~$\omega_0$ on all~$\cT$-compatible~$\ell$-cycles in~$H$ and show that~$\omega_0$ is already almost a function~$\omega$ as desired (see~\eqref{equation: initial value is close} and~\eqref{equation: initial weight sum is close} below). 
	In a further step we will slightly adjust~$\omega_0$ using transporters to obtain a fractional~$C^k_{\ell}$-decomposition of~$H$.
	
	Let us observe that for all integers~$\ell'\geq 1$ with~$\ell/3\leq \ell'\leq 2\ell$,~$U\subseteq V(H)$ with~$\abs{U}\leq k\ell$ and~$\orderededge{s},\orderededge{t}\in \orderededge{E}(H-U)$, the number of internally self-avoiding~$\cT$-compatible~$\ell'$-walks from~$\orderededge{s}$ to~$\orderededge{t}$ in~$H-U$ is
	\begin{equation}\label{equation: estimation of number of walks}
		(1\pm(1-\zeta/2)^{\ell'})\frac{r^{\ell'-1}}{\orderededge{e}(H)}.
	\end{equation}
	Indeed, the number of~$\ell'$-walks from~$\orderededge{s}$ to~$\orderededge{t}$ in~$H$ that are not internally self-avoiding is at most
	\begin{equation*}
		4\ell^2 r^{\ell'-k-2}\leq \frac{4\ell^2}{\eps^{k+1}n}\frac{r^{\ell'-1}}{n^k}\leq\frac{1}{\sqrt{n}}\frac{r^{\ell'-1}}{\orderededge{e}(H)}
	\end{equation*}
	and the number of~$\ell'$-walks from~$\orderededge{s}$ to~$\orderededge{t}$ in~$H$ that are not walks in~$H-U$ is at most
	\begin{equation*}
		2k\ell^2r^{\ell'-k-2}\leq\frac{2k\ell^2 }{\eps^{k+1}n}\frac{r^{\ell'-1}}{n^k}\leq\frac{1}{\sqrt{n}}\frac{r^{\ell'-1}}{\orderededge{e}(H)}.
	\end{equation*}
	
	Note that for every fractional~$C_{\ell}^k$-decomposition~$\omega$ of~$H$, we have~$\sum_{C\in\cC_{\ell}(H)}\omega(C)=e(H)/\ell$. Let~$c_{\cT,\ell}$ be the number of~$\cT$-compatible~$\ell$-cycles in~$H$ and let~$\omega_0\colon \cC_{\ell}(H)\rightarrow [0,\infty)$ such that
	\begin{equation*}
		\omega_0(C)=\begin{cases}
			\frac{e(H)}{\ell c_{\cT,\ell}}&\text{if~$C$ is~$\cT$-compatible};\\
			0&\text{otherwise}
		\end{cases}
	\end{equation*}
	holds for all~$C\in \cC_{\ell}(H)$.
	When counting the cycles given by the self-avoiding closed~$\cT$-compatible~$(\ell+1)$-walks from~$\orderededge{e}$ to~$\orderededge{e}$ in~$H$ for all~$\orderededge{e}\in\orderededge{E}(H)$, we count every~$\cT$-compatible~$\ell$-cycle in~$H$ exactly~$2\ell$ times. Together with~\eqref{equation: estimation of number of walks} this shows
	\begin{equation}\label{equation: estimate number of cycles}
	c_{\cT,\ell}=(1\pm(1-\zeta/2)^{\ell+1})\frac{r^{\ell}}{\orderededge{e}(H)}\cdot \orderededge{e}(H)\cdot \frac{1}{2\ell}=(1\pm(1-\zeta/2)^{\ell+1})\frac{r^{\ell}}{2\ell}.
	\end{equation}
	As~$(1-\zeta/2)^{\ell+1}\leq \mu/4$, this implies\COMMENT{%
		use~$\frac{1}{1-x}= \frac{1-x^2}{1-x}+\frac{x^2}{1-x}\leq 1+x+2x^2\leq 1+2x$ and~$\frac{1}{1+x}\geq \frac{1-x^2}{1+x}=1-x$ with~$x=(1-\zeta/2)^{\ell+1}$
	}
	\begin{equation}\label{equation: initial value is close}
		\frac{e(H)}{\ell c_{\cT,\ell}}=\left(1\pm \frac{\mu}{2}\right)\frac{2e(H)}{r^{\ell}}.
	\end{equation}
	For~$e\in E(H)$, let~$\xi(e)$ be the weight by which the sum of the weights of the~$\cT$-compatible $\ell$-cycles~$C$ in~$H$ with~$e\in E(C)$ deviates from $1$, that is, let~$\xi\colon E(H)\rightarrow \bR$ such that 
	\begin{equation*}
		\xi(e)=\Bigl(\sum_{C\in \cC_{\ell}(H)\colon e\in E(C)}\omega_0(C)\Bigr)-1
	\end{equation*}
	holds for all~$e\in E(H)$.
	When counting the cycles given by the self-avoiding closed~$\cT$-compatible~$(\ell+1)$-walks from~$\orderededge{f}$ to~$\orderededge{f}$ in~$H$ for all~$\orderededge{f}\in \orderededge{E}(H)$ with~$f=e$, we count every~$\cT$-compatible~$\ell$-cycle~$C$ in~$H$ with~$e\in E(C)$ exactly twice.
	Consequently, since~$(1-\zeta/2)^{\ell+1}\leq 1/2$, exploiting~\eqref{equation: estimation of number of walks} and~\eqref{equation: estimate number of cycles} yields\COMMENT{%
		use~$\frac{1+x}{1-x}\leq 1+\frac{2x}{1-x}\leq 1+4x$ and~$\frac{1-x}{1+x}\geq 1-\frac{2x}{1+x}\geq 1-4x$ with~$x=(1-\zeta/2)^{\ell+1}$
	}
	\begin{equation}\label{equation: initial weight sum is close}
		\xi(e)=(1\pm(1-\zeta/2)^{\ell+1})\frac{r^{\ell}}{\orderededge{e}(H)}\cdot k!\cdot\frac{1}{2}\cdot\frac{e(H)}{\ell c_{\cT,\ell}}-1=0\pm 4(1-\zeta/2)^{\ell+1}.
	\end{equation}
	To obtain from Lemma~\ref{lemma: transport plan} the existence of a flow describing adjustments of~$\omega_0$ that transform it into a function~$\omega$ as desired, consider the directed graph~$D$ with vertex set~$E(H)$ where a pair~$(e,f)$ of edges of~$H$ is an arc in~$D$ if and only if~$e\cap f=\emptyset$. Let~$\xi_{\max}:=\max_{e\in E(H)}\abs{\xi(e)}$. Observe that~$e(H)\geq \frac{\delta n^k}{2k!}$.
	For all distinct~$s,t\in E(H)$, there are at least~$e(H)/2$ edges in~$H$ disjoint from~$s$ and~$t$ and thus there are at least~$e(H)/2$ (directed) paths from~$s$ to~$t$ in~$D$ that have length~$2$. Consequently, Lemma~\ref{lemma: transport plan} yields a function~$\eta\colon A(D)\rightarrow\bigl[0,\frac{16k!\xi_{\max}}{\delta n^k}\bigr]$ with
	\begin{equation}\label{equation: adjustments work}
		\sum_{C\in \cC_{\ell}(H)\colon e\in E(C)}\omega_0(C)+\sum_{f\colon(f,e)\in A} \eta(f,e)-\sum_{f\colon(e,f)\in A} \eta(e,f)=1
	\end{equation}
	for all~$e\in E(H)$.
	(Note that we now consider~$\ell$-transporters, not~$(\ell-1)$-transporters as in the discussion above.)
	Observe that we can build an~$(\ell,\orderededge{s},\orderededge{t})$-transporter~$\orderededge{e}_0\ldots\orderededge{e}_{k(\ell+1)}$ whose sending cycles are~$\cT$-compatible by choosing~$\orderededge{e}_{i(\ell+1)+\floor*{\frac{\ell+1}{2}}+1}$ and concatenating~$\cT$-compatible self-avoiding~$\ceil*{\frac{\ell+1}{2}}$-walks from~$\orderededge{e}_{i(\ell+1)+\floor*{\frac{\ell+1}{2}}+1}$ to~$\rotatededge{s}{i+1}$ and $\cT$-compatible self-avoiding~$\bigl(\floor*{\frac{\ell+1}{2}}+1\bigr)$-walks from~$\rotatededge{s}{i+1}$ to~$\orderededge{e}_{i(\ell+1)+\floor*{\frac{\ell+1}{2}}+1}$ (with indices considered modulo~$k(\ell+1)$) for all~$i\in[k-1]_0$. 
	As we know how many such walks exist at least, this implies that for all~$\orderededge{s},\orderededge{t}\in \orderededge{E}(H)$ with~$s\cap t=\emptyset$, the number of~$(\ell,\orderededge{s},\orderededge{t})$-transporters in~$H$ whose sending cycles are~$\cT$-compatible is at least\COMMENT{%
		since~$\ell\geq k$ holds we have~$(1-\zeta/2)^{\ell+1}\leq \frac{1}{100k^2}$ and thus~$(1-\zeta/2)^{\frac{\ell+1}{2}}\leq \frac{1}{10k}$. This yields~$(1-(1-\zeta/2)^{\ceil*{\frac{\ell+1}{2}}})^{k}(1-(1-\zeta/2)^{\floor*{\frac{\ell+1}{2}}+1})^{k}\geq (1-\frac{1}{6k})^{2k}\geq 1-\frac{2k}{6k}=2/3$
	}
	\begin{align*}
		m&:=\left((\delta n-k\ell)\cdot\left(1-(1-\zeta/2)^{\ceil*{\frac{\ell+1}{2}}}\right)\frac{r^{\ceil*{\frac{\ell+1}{2}}-1}}{\orderededge{e}(H)}\cdot\left(1-(1-\zeta/2)^{\floor*{\frac{\ell+1}{2}}+1}\right)\frac{r^{\floor*{\frac{\ell+1}{2}}}}{\orderededge{e}(H)}\right)^k\\
		&\geq\frac{1}{2}\delta^k n^k\frac{r^{k\ell}}{\orderededge{e}(H)^{2k}}.
	\end{align*}
	For~$\orderededge{s},\orderededge{t}\in \orderededge{E}(H)$ with~$s\cap t=\emptyset$, let~$\sT_{\orderededge{s},\orderededge{t}}$ be a set of~$m$ distinct~$(\ell,\orderededge{s},\orderededge{t})$-transporters in~$H$ whose sending cycles are~$\cT$-compatible. Let~$T_1\ldots T_p$ be an ordering of
	\begin{equation*}
		\bigcup_{\orderededge{s},\orderededge{t}\in \orderededge{E}(H)\colon s\cap t=\emptyset}\sT_{\orderededge{s},\orderededge{t}}
	\end{equation*}
	and for~$i\in[p]$, let~$\omega_i$ be the function obtained from~$\omega_{i-1}$ by using the~$(\ell,\orderededge{s}_i,\orderededge{t}_i)$-transporter~$T_i$ with weight~$\frac{\eta(s,t)}{(k!)^2m}$. 
	(Note that for all disjoint~$s,t\in E(H)$, the number of pairs~$(\orderededge{s}',\orderededge{t}')$ with~$\orderededge{s}',\orderededge{t}'\in\orderededge{e}(H)$,~$s'=s$ and~$t'=t$ is~$(k!)^2$.) 
	Let~$\omega:=\omega_p$. By~\eqref{equation: adjustments work}, the function~$\omega$ satisfies
	\begin{equation*}
		\sum_{C\in \cC_{\ell}(H)\colon e\in E(C)}\omega(C)=1
	\end{equation*}
	for all~$e\in E(H)$ and it remains to show~$\omega(C)\geq 0$ for all~$C\in\cC_{\ell}(H)$ and $\omega(C)=(1\pm \mu) \frac{2e(H)}{r^{\ell}}$ for all~$C\in\cC_{\ell}(H)$ that are~$\cT$-compatible as well as~$\omega(C)\leq\mu \frac{2e(H)}{r^{\ell}}$ for all~$C\in\cC_{\ell}(H)$ that are not~$\cT$-compatible.
	
	For all~$C\in\cC_{\ell}(H)$, there are at most~$2k\ell\cdot n$ pairs~$(\orderededge{s},\orderededge{t})\in\orderededge{E}(H)^2$ such that~$C$ is a sending cycle of an~$(\ell,\orderededge{s},\orderededge{t})$-transporter in~$H$ whose sending cycles are~$\cT$-compatible (given~$C$, we have~$2k\ell$ choices for~$\orderededge{s}$ and then at most~$n$ choices for~$\orderededge{t}$). Now we argue similarly as above and conclude that for all~$C\in\cC_{\ell}(H)$, the number of~$\ell$-transporters in~$H$ whose sending cycles are~$\cT$-compatible that have~$C$ as one of their sending cycles is bounded from above by\COMMENT{%
		since~$\ell\geq k$ holds we have~$(1-\zeta/2)^{\ell+1}\leq \frac{1}{100k^2}$ and thus~$(1-\zeta/2)^{\frac{\ell+1}{2}}\leq \frac{1}{10k}$. This yields~$(1+(1-\zeta/2)^{\ceil*{\frac{\ell+1}{2}}})^{k}(1+(1-\zeta/2)^{\floor*{\frac{\ell+1}{2}}+1})^{k}\leq (1+\frac{1}{6k})^{2k}=((1+\frac{1}{6k})^{6k})^{1/3}\leq ((\eul^{1/6k})^{6k})^{1/3}\leq 3/2$
	}
	\begin{align*}
		m'&:=2k\ell n\left(n\cdot\left(1+(1-\zeta/2)^{\ceil*{\frac{\ell+1}{2}}}\right)\frac{r^{\ceil*{\frac{\ell+1}{2}}-1}}{\orderededge{e}(H)}\cdot\left(1+(1-\zeta/2)^{\floor*{\frac{\ell+1}{2}}+1}\right)\frac{r^{\floor*{\frac{\ell+1}{2}}}}{\orderededge{e}(H)}\right)^{k-1}\\&\leq 2k\ell n^{k}\frac{3}{2} \frac{r^{k\ell-\ell}}{\orderededge{e}(H)^{2k-2}}.
	\end{align*}
	Furthermore, similar considerations show that~$m'$ is also an upper bound for the number of~$\ell$-transporters in~$H$ whose sending cycles are~$\cT$-compatible that have~$C$ as one of their receiving cycles.
	
	In the transition from~$\omega_0$ to~$\omega$, by employing transporters as suggested by~$\eta$, we used transporters with weight at most~$\frac{1}{(k!)^2 m}\cdot\frac{16k!\xi_{\max}}{\delta n^k}=\frac{16\xi_{\max}}{\delta k! mn^k}$. Recall that using a transporter with weight~$w$ changes the weights on its sending and receiving cycles by~$w/k$. Thus exploiting~\eqref{equation: initial value is close} yields that it suffices to show that
	\begin{equation*}
		m'\cdot\frac{1}{k}\cdot\frac{16\xi_{\max}}{\delta k! m n^k}\leq \frac{\mu}{2}\frac{2e(H)}{r^{\ell}}.
	\end{equation*}	
	We obtain
	\begin{equation*}
		m'\cdot\frac{1}{k}\cdot\frac{16\xi_{\max}}{\delta k! m n^k}
		\leq \frac{6\ell\orderededge{e}(H)^2}{\delta^k r^{\ell}}\cdot \frac{16\xi_{\max}}{\delta k! n^k}
		\stackrel{\eqref{equation: initial weight sum is close}}{\leq} \frac{192}{\delta^{k+1}}\cdot \ell(1-\zeta/2)^{\ell+1}\cdot \frac{2e(H)}{r^{\ell}}\leq \frac{\mu}{2}\frac{2e(H)}{r^{\ell}},
	\end{equation*}
	which completes the proof.
\end{proof}

\section{Obtaining an almost uniform factional decomposition}\label{sec:proof main thm}

Observe that we immediately can deduce from Lemmas~\ref{lemma: transition system connecting},~\ref{lemma: counting walks} and \ref{lemma: adjustments} 
that all sufficiently large $(\alpha,\ell_0)$-connected~$k$-graphs admit a fractional $\ell$-cycle decomposition
whenever~$\ell$ is large in terms of~$\ell_0$.
However, in the following proof of Theorem~\ref{thm: stronger version}, we additionally exploit that Lemma~\ref{lemma: transition system connecting} does not simply guarantee the existence of a transition system suitable for the application of Lemma~\ref{lemma: counting walks} and thus Lemma~\ref{lemma: adjustments}, but that a randomly chosen transition system is suitable with high probability.

\lateproof{Theorem~\ref{thm: stronger version}}
First we show that Lemma~\ref{lemma: transition system connecting} and a probabilistic argument yield multiple suitable transition systems as a basis for an application of Lemmas~\ref{lemma: counting walks} and \ref{lemma: adjustments}. Then we use the fractional decompositions obtained from Lemma~\ref{lemma: adjustments} to construct a fractional decomposition as desired.

Suppose~$1/n\ll \alpha,\mu,1/\ell,1/k$. Suppose~$H$ is an~$(\alpha,\ell_0)$-connected~$k$-graph as in Theorem~\ref{thm: stronger version}.
Let~$\delta:=\delta(H)/n$,~$\Delta:=\Delta(H)/n$,~$\eps:=(1-\mu/4)^{1/\ell}\delta$,~$r\geq 2$ be an even integer with~$\eps n\leq r\leq \delta n$ and~$\zeta:=\frac{\alpha}{6\ell_0}$. 
Let~$\cT_1,\ldots,\cT_n$ be independently chosen random~$r$-transition systems of~$H$.
For~$\orderededge{s}=(s_1,\ldots,s_k)\in \orderededge{E}(H)$ with~$d_H(\{s_2,\ldots,s_k\})=\delta(H)$ and~$\orderededge{t}\in\orderededge{E}(H)$, the number of~$\ell_0$-walks from~$\orderededge{s}$ to~$\orderededge{t}$ in~$H$ is at most~$\delta(H)n^{\ell_0-k-2}$.
Thus~$\delta\geq \alpha$ holds.
The lower bound for~$\ell$ implies
\begin{equation*}
	\ell/3\geq \frac{60k\ell_0}{\alpha}\log\biggl(\frac{3}{\alpha}\biggr)\geq \frac{60k\ell_0}{\alpha}\log\biggl(\frac{3}{\delta}\biggr)= \frac{30k\ell_0}{\alpha/2}\log\biggl(3(1-\mu/4)^{1/\ell}\frac{1}{\eps}\biggr)\geq \frac{3k\ell_0\log(2/\eps)}{\alpha/2}.
\end{equation*}
Thus Lemmas~\ref{lemma: transition system connecting} and~\ref{lemma: counting walks} imply
\begin{equation*}
	\pr[\text{$H$ is~$(1-\zeta)^{\ell'}$-exactly~$\cT_i$-compatibly~$\ell'$-connected for all~$\ell'\geq\ell/3$}]\geq 1-\exp(-\sqrt{n})
\end{equation*}
for all~$i\in[n]$.
For all~$C\in\cC_{\ell}(H)$ and~$i\in[n]$, the cycle~$C$ is~$\cT_i$-compatible with probability at least~$\bigl(\frac{\eps n}{\Delta(H)}\bigr)^{\ell}=(1-\mu/4)\delta^{\ell}/\Delta^{\ell}$.
Consequently, Lemma~\ref{lemma: Chernoff} and a suitable union bound show that it is possible to select~$\cT_1,\ldots,\cT_n$ with
\begin{equation*}
	\abs{\{ i\in[n]:\text{ $C$ is~$\cT_i$-compatible } \}}\geq (1-\mu/2)\frac{\delta^{\ell}}{\Delta^{\ell}}n
\end{equation*}
for all~$C\in\cC_{\ell}(H)$ such that~$H$ is~$(1-\zeta)^{\ell'}$-exactly~$\cT_i$-compatibly~$\ell'$-connected for all~$\ell'\geq\ell/3$ and~$i\in[n]$. 
The function~$x\mapsto x\exp(-x)$ is monotonically decreasing on~$[1,\infty)$\COMMENT{%
	the function given by~$x\exp(-x)$ has its maximum at~$x=1$ and is thus monotonically decreasing for~$x\geq 1$
}. 
In addition, with room to spare, we will exploit that  $\exp(-15kAB)\leq \exp(-11k)\exp(-3kA)\exp(-kB)$ for $A,B\geq 1$, where we set $A:=\log\frac{\ell_0}{\alpha}$ and $B:=\log\frac{1}{\mu}$.
This implies
\begin{align*}
	\ell(1-\zeta/2)^{\ell+1}
	&\leq \frac{2}{\zeta}\cdot\frac{\zeta}{2}\ell\exp\biggl(-\frac{\zeta}{2}\ell\biggr)
	\leq \frac{2}{\zeta}\cdot 15k\log\biggl(\frac{\ell_0}{\alpha}\biggr)\log\biggl(\frac{1}{\mu}\biggr)\exp\biggl(-15k\log\biggl(\frac{\ell_0}{\alpha}\biggr)\log\biggl(\frac{1}{\mu}\biggr)\biggr)\\
	&\leq 180k\frac{\ell_0}{\alpha}\log\biggl(\frac{\ell_0}{\alpha}\biggr)\log\biggl(\frac{1}{\mu}\biggr) 
	\cdot \exp(-11k)\biggl(\frac{\alpha}{\ell_0}\biggr)^{3k}\mu^{k}\\
	&\leq \frac{1}{800k}\biggl(\frac{\alpha}{\ell_0}\biggr)^{k+1}\mu
	\leq \frac{\delta^{k+1}\mu/2}{400k}.
\end{align*}
Thus, Lemma~\ref{lemma: adjustments} yields fractional~$C^k_{\ell}$-decompositions~$\omega_1,\ldots,\omega_n$ of~$H$ with
\begin{equation*}
	\abs*{\Bigl\{ i\in[n]:{\textstyle \omega_i(C)\geq (1-\mu/2)\frac{2e(H)}{r^{\ell}}} \Bigr\}}\geq (1-\mu/2)\frac{\delta^{\ell}}{\Delta^{\ell}}n\quad\text{ and }\quad\omega_i(C)\leq (1+\mu/2)\frac{2e(H)}{r^{\ell}}
\end{equation*}
for all~$C\in\cC_{\ell}(H)$ and~$i\in[n]$.
For~$\omega=\frac{1}{n}\sum_{i\in[n]} \omega_i$ and~$C\in\cC_{\ell}(H)$ the first of these two inequalities implies
\begin{equation*}
	\omega(C)\geq (1-\mu/2)^2\frac{\delta^{\ell}}{\Delta^{\ell}}\frac{2e(H)}{r^{\ell}}\geq (1-\mu/2)^2\frac{2e(H)}{\Delta(H)^{\ell}}\geq (1-\mu)\frac{2e(H)}{\Delta(H)^{\ell}}
\end{equation*}
and from the second we obtain\COMMENT{%
	For the last inequality use~$\frac{1+x}{1-x/2}\leq 1+x\frac{3/2}{1-x/2}\leq 1+x\frac{3/2}{3/4}\leq 1+2x$ for~$x=\mu/2$
}
\begin{equation*}
	\omega(C)\leq (1+\mu/2)\frac{2e(H)}{r^{\ell}}\leq (1+\mu/2)\frac{2e(H)}{\eps^{\ell}n^{\ell}}= \frac{1+\mu/2}{1-\mu/4}\cdot\frac{2e(H)}{\delta(H)^{\ell}}\leq (1+\mu)\frac{2e(H)}{\delta(H)^{\ell}}.
\end{equation*}
\endproof

\section{Concluding remarks}

In this paper we prove the existence of fractional cycle decompositions in $k$-graphs~$H$ on~$n$ vertices under the very mild assumption that $H$ is ``well-connected''.
This in particular includes the case that $\delta(H)\geq (1/2+o(1))n$,
settles a question posed by Glock, K\"uhn and Osthus for graphs,
and improves significantly on previous results.

The method presented here can be easily pushed further; for example for graphs, with few modifications one can show that 
the fractional decomposition threshold for
subdivisions of cliques (or even any connected graph)
also tends to $1/2$ when the length of the subdivided edges tends to infinity.

Another natural question arising from Theorem~\ref{thm: main result} is the best dependence between $\eps$ and $\ell$ (for fixed $k$) where $\eps$ is defined by $\delta_{C_\ell^k}^*=1/2+\eps$.
Example~\ref{example} shows that $\eps\geq \frac{1}{(k-1+2/k)(\ell-1)}$
and our results imply, say, that $\eps = O(\ell^{-1/(4k!)})$.
Any substantial improvement on these bounds would be interesting.

\bibliographystyle{amsplain}
\bibliography{ReferencesLocal}

\bigskip

\end{document}